\NewDocumentCommand \tensor {O{}m} {\boldsymbol{#1\mathscr{\MakeUppercase{#2}}}} 
\newcommand{\mat}[1]{\mathbf{#1}}
\newcommand{\vect}[1]{\bm{#1}}
\newcommand{\mc}[1]{\mathcal{#1}}
\newcommand{\bb}[1]{\mathbb{#1}}
\DeclareMathOperator{\diag}{diag}
\theoremstyle{thmstyleone}%
\newtheorem{theorem}{Theorem}%
\newtheorem{lemma}[theorem]{Lemma}
\theoremstyle{thmstyletwo}%
\newtheorem{remark}{Remark}%
\theoremstyle{thmstylethree}%
\newtheorem{definition}{Definition}%
\begin{document}
\begin{sloppypar}
\title[A Block-Randomized Stochastic Method for CP Decomposition]{A Block-Randomized Stochastic Method with Importance Sampling for CP Tensor Decomposition }


\author[1]{\fnm{Yajie} \sur{Yu}}\email{zqyu@cqu.edu.cn}

\author*[1]{\fnm{Hanyu} \sur{Li}}\email{lihy.hy@gmail.com or hyli@cqu.edu.cn}


\affil*[1]{\orgdiv{College of Mathematics and Statistics}, \orgname{Chongqing University}, \city{Chongqing}, \postcode{401331}, \country{P.R. China}}



\abstract{One popular way to compute the CANDECOMP/PARAFAC (CP) decomposition of a tensor is to transform the problem into a sequence of overdetermined least squares subproblems with Khatri-Rao product (KRP) structure involving factor matrices. In this work, based on choosing the factor matrix randomly, we propose a mini-batch stochastic gradient descent method with importance sampling for those special least squares subproblems. Two different sampling strategies are provided. They can avoid forming the full KRP explicitly and computing the corresponding probabilities directly. 
The adaptive step size version of the method is also given. For the proposed method, we present its detailed theoretical properties and comprehensive numerical performance. The results on synthetic and real data show that our method performs better than the corresponding  one in the literature. 
}

\keywords{CP decomposition, Importance sampling, Stochastic gradient descent, Khatri-Rao product, Randomized algorithm, Adaptive algorithm}

\pacs[MSC Classification]{15A69, 68W20, 90C52}

\maketitle

\section{Introduction}
\label{sec:introduction}
The \emph{CP decomposition} of a tensor factorizes the tensor into a sum of $R$ rank-one tensors. That is, given an $N$-way tensor $\tensor{X}$ of size $I_1 \times I_2 \times \cdots \times I_{N}$, we wish to write it as:
\begin{equation*}
\label{eq:cp}
\tensor{X} = \llbracket \mat{A}^{(1)},\mat{A}^{(2)}, \cdots,\mat{A}^{(N)} \rrbracket=\sum_{r=1}^{R} \vect{a}^{(1)}_r \circ \vect{a}^{(2)}_r  \circ \cdots \circ \vect{a}^{(N)}_r,
\end{equation*}
where $R$ is a positive integer, $\vect{a}^{(n)}_r \in \bb{R}^{I_n}$, 
and $\mat{A}^{(n)}=[\vect{a}^{(n)}_1, \vect{a}^{(n)}_2, \cdots, \ \vect{a}^{(n)}_R] \in \bb{R}^{I_n \times R}$ for $n=1, \cdots,N$. 
Usually, $\mat{A}^{(n)}$ is called the $n$-th \emph{factor matrix}. The CP decomposition is an important tool for data analysis and has found many important applications in some fields such as chemometrics, biogeochemistry, neuroscience, signal processing, and cyber traffic analysis; see e.g., \cite{kolda2009TensorDecompositions,sidiropoulos2017TensorDecomposition}.

It is well known that the computation of CP decomposition is a challenging problem. Currently, there are many methods for this decomposition. A popular one is the alternating least squares (ALS) method proposed in the original papers \cite{carroll1970AnalysisIndividual,harshman1970FoundationsPARAFAC}. 
Specifically, we transform the $n$-th factor matrix $\mat{A}^{(n)}$ as the solution to the following least squares problem:
\begin{equation}
\label{eq:lsq_krp}
\begin{gathered}
\min_{\mat{A} \in \bb{R}^{I_n \times R}}  \| \mat{Z}^{(n)} \mat{A}^\intercal - \mat{X}_{(n)}^\intercal \|_F^2,
\end{gathered}
\end{equation}
where $\mat{Z}^{(n)} = \mat{A}^{(N)} \odot \cdots \odot \mat{A}^{(n+1)} \odot \mat{A}^{(n-1)} \odot \cdots \odot \mat{A}^{(1)} \in \bb{R}^{J_n \times R}$ with the symbol $\odot$ denoting the KRP, $\mat{X}_{(n)} \in \bb{R}^{I_n \times J_n}$ is the mode-$n$ unfolding of the input tensor $\tensor{X}$, and $J_n=\prod_{m=1, m \ne n}^{N} I_m$.
Here, the \emph{mode-$n$ unfolding} of a tensor means aligning the mode-$n$ fibers as the columns of an $I_n \times J_n$ matrix and the relation between the index of the tensor entry $x_{i_1,i_2, \cdots,i_N}$ and the index of the matrix entry $x_{i_n, j}$ is
\begin{equation}
\label{eq:map}
j=1+\sum_{k=1, k\ne n}^{N} (i_k-1) J'_k,  
\end{equation}
where $J'_k=\prod_{m=1, m \ne n}^{k-1} I_m$.
More symbol definitions are consistent with \cite{kolda2009TensorDecompositions}.

As we know, the ALS method is the  ``workhorse" method for CP decomposition (CP-ALS). However, for large-scale problems, the cost of the method is prohibitive.
To reduce the cost, Battaglino et al. \cite{battaglino2018PracticalRandomized} applied random projection and uniform sampling techniques of the regular least squares problem \cite{drineas2011FasterLeast} to \eqref{eq:lsq_krp} and designed the corresponding randomized algorithms in 2018. A main and attractive feature of the algorithms in \cite{battaglino2018PracticalRandomized} is that they never explicitly form the full KPR matrices when applying projection and sampling.
Later, building on the uniform sampling technique in \cite{battaglino2018PracticalRandomized}, Fu et al. \cite{fu2020BlockRandomizedStochastic} utilized the mini-batch stochastic gradient descent (SGD) method to solve the least squares subproblems in CP-ALS. This method was recently extended to the momentum version \cite{wang2021momentum}.
In 2020, Larsen and Kolda \cite{larsen2020PracticalLeverageBased} performed the leverage-based sampling for \eqref{eq:lsq_krp} without forming the full KRP matrices and computing the corresponding probabilities directly. By the way, the method for estimating the leverage scores of the KRP matrix without forming it explicitly also appears in \cite{cheng2016SPALSFast}.
The random sampling methods introduced in \cite{battaglino2018PracticalRandomized,fu2020BlockRandomizedStochastic,larsen2020PracticalLeverageBased} are built on fiber sampling. Besides, there are also some other random sampling algorithms for CP decomposition built on element sampling \cite{vervliet2014BreakingCurse,bhojanapalli2015NewSampling,vu2015NewStochastic} or sub-tensor sampling \cite{beutel2014FlexiFaCTScalable,vervliet2016RandomizedBlock}. However, these two samplings are not suitable for the case of constraint \cite{fu2020BlockRandomizedStochastic}. 

Inspired by the doubly randomized computational framework in \cite{fu2020BlockRandomizedStochastic} and the leverage-based sampling method in \cite{larsen2020PracticalLeverageBased}, in this work, we present a mini-batch SGD method with importance sampling for CP decomposition.
On the basis of the leverage scores or the squared Euclidean norms about rows of the KRP matrices, we propose two sampling strategies to select the mini-batch for the SGD method. 
As in \cite{larsen2020PracticalLeverageBased}, these two strategies don't need to form the KRP matrices explicitly and compute the corresponding probabilities directly
 either.
Since the rows sampled by importance sampling contain more information compared to those by uniform sampling \cite{needell2016StochasticGradient}, 
our new method can converge faster than the one from \cite{fu2020BlockRandomizedStochastic}. 
Extensive numerical experiments validate this result.

The remainder of this paper is organized as follows.  
\Cref{sec:background} provides some preliminaries. 
In \Cref{sec:samp_alg}, we present the sampling strategies and our method and its adaptive step size
version. 
The relevant theoretical properties are given in \Cref{sec:Convergence_Properties}.
\Cref{sec:Numerical_Results} is devoted to numerical experiments to illustrate our method. Finally, the concluding remarks of the whole paper are presented.

\section{Preliminaries}
\label{sec:background}
We first introduce the idea on sampling the rows without forming the full  KRP given in \cite{battaglino2018PracticalRandomized}, that is, how to compute the sampled matrix $\mat{Z}^{(n)}(\mc{F}_n, :)$ without 
explicitly forming $\mat{Z}^{(n)}$, where the set $\mc{F}_n \subset \{1, \cdots, J_n\}$  contains the 
indices of the rows sampled from $\mat{Z}^{(n)}$ and we will use the shorthand notation $\mat{Z}^{(n)}_{\mc{F}_n}=\mat{Z}^{(n)}(\mc{F}_n, :)$ later in this paper. 
The idea mainly comes from the structure of $\mat{Z}^{(n)}$ and the definition of KRP,  
which make 
the 
rows of $\mat{Z}^{(n)}$ 
be written as the Hadamard product of the corresponding rows of the factor matrices, i.e.,
\begin{align}
\label{eq:index_map}
\mat{Z}^{(n)}(j,:) &= \mat{A}^{(1)}(i_1,:) \circledast \cdots \circledast \mat{A}^{(n-1)}(i_{n-1},:)\nonumber \\
&\quad \circledast \mat{A}^{(n+1)}(i_{n+1},:) \circledast \cdots \circledast \mat{A}^{(N)}(i_{N},:),
\end{align}
and the index $j$ and the \emph{multi-index} $(i_1,\cdots,i_{n-1},i_{n+1},\cdots,i_N)$ be related as in \eqref{eq:map}.
Based on the above fact,  Battaglino et al. \cite{battaglino2018PracticalRandomized} presented  \Cref{alg:skr} for computing $\mat{Z}^{(n)}_{\mc{F}_n}$, i.e., the \emph{sampled KRP} (SKRP). The notation $\vect{idx} \in \bb{R}^{|\mc{F}_n| \times (N-1)}$ in \Cref{alg:skr} represents the set of $|\mc{F}_n|$ multi-indices, that is, a row in $\vect{idx}$ represents a multi-index:
\begin{equation}
\label{eq:idx}
\{i_1^{(j)},\cdots,i_{n-1}^{(j)},i_{n+1}^{(j)},\cdots,i_N^{(j)}\} 
~~ \text{with} ~~ j \in \mc{F}_n.
\end{equation}
Here, we assume that these tuples are stacked in matrix
form for efficiency. Thus, each multiplicand $\mat{A}_{|\mc{F}_n|}^{(n)}$ is of size
$|\mc{F}_n| \times R$. Furthermore, \Cref{alg:skr} also presents how to compute $\mat{X}_{(n)}^{\mc{F}_n} = \mat{X}_{(n)}^\intercal (\mc{F}_n, :) = \mat{X}_{(n)}(:, \mc{F}_n)$ given $\vect{idx}$. Note that, to find $\mat{Z}^{(n)}_{\mc{F}_n}$ and $\mat{X}_{(n)}^{\mc{F}_n}$, we don't need to form ${\mc{F}_n}$ explicitly either. 

\begin{algorithm}
	\caption{SKRP and Sampled tensor fibers (SKRP-ST) \cite{battaglino2018PracticalRandomized}}
	\label{alg:skr}
	\begin{algorithmic}[1]\footnotesize
		\Function{$[\mat{Z}^{(n)}_{\mc{F}_n}, \mat{X}_{(n)}^{\mc{F}_n}]$ = SKRP-ST}{$n$, $\vect{idx}$, $\{\mat{A}^{(k)}\}_{k=1, k \ne n}^{N}$}
		\State $\mat{Z}^{(n)}_{\mc{F}_n} \gets \textbf{1}$ 
		\Comment $\textbf{1} \in \bb{R}^{|\mc{F}_n| \times R}$ a matrix with all elements 1
		\For{$m=1,\cdots,n-1,n+1,\cdots,N$}
		\State $\mat{A}_{|\mc{F}_n|}^{(m)} \gets \mat{A}^{(m)}(\vect{idx}(:, m),:)$  \label{algstep:idx} 
		\State $\mat{Z}^{(n)}_{\mc{F}_n} \gets \mat{Z}^{(n)}_{\mc{F}_n} \circledast \mat{A}_{|\mc{F}_n|}^{(m)}$
		\EndFor
		\State $\tensor{X}^{\mc{F}_n} \gets
		\tensor{X}(\vect{idx}(:,1),\cdots,\tensor{X}(\vect{idx}(:,n-1),:,\tensor{X}(\vect{idx}(:,n+1),\cdots,\tensor{X}(\vect{idx}(:,N))$
		\State $\mat{X}_{(n)}^{\mc{F}_n} \gets$ \textsc{Unfolding}($\tensor{X}^{\mc{F}_n}$, $n$)
		\State \Return $\mat{Z}^{(n)}_{\mc{F}_n},\mat{X}_{(n)}^{\mc{F}_n}$
		\EndFunction
	\end{algorithmic}
\end{algorithm}


Combining  \Cref{alg:skr} with uniform sampling, i.e., the indices in each column of $\vect{idx}$ are sampled from the corresponding index set uniformly, and the SGD method, Fu et al. \cite{fu2020BlockRandomizedStochastic} proposed the {\it \textbf{B}lock-\textbf{Ra}ndomized \textbf{S}GD for \textbf{CP} \textbf{D}ecomposition} (\texttt{BrasCPD}), where the block-randomized means choosing the factor matrix, i.e., sampling a mode from
all modes, randomly. Specifically, the authors first rewrite the CP decomposition of a tensor as the following optimization problem:
\begin{equation}
\label{eq:certain_criterion}
\min_{\{\mat{A}^{(n)}\in \bb{R}^{I_n \times R}\}_{n=1}^N }~f(\mat{A}^{(1)},\cdots,\mat{A}^{(N)}),
\end{equation}
where
\begin{align*}
\label{eq:certain_criterion_ls}
f(\mat{A}^{(1)},\cdots,\mat{A}^{(N)}) &= \frac{1}{2} \| \tensor{X} - \llbracket \mat{A}^{(1)},\mat{A}^{(2)}, \cdots,\mat{A}^{(N)} \rrbracket \|_F^2 \\
&= \frac{1}{2} \| \mat{Z}^{(n)} (\mat{A}^{(n)})^\intercal - \mat{X}_{(n)}^\intercal \|_F^2.
\end{align*}
Then, by choosing a mode randomly and obtaining  $\mat{Z}^{(n)}_{\mc{F}_n}$ and $\mat{X}_{(n)}^{\mc{F}_n}$ using \Cref{alg:skr} with uniform sampling, with the GD method, 
they update the latent factor
of the sampled least squares problem of \eqref{eq:lsq_krp}, i.e.,
\begin{equation}
\label{eq:solving_sam_laq}
\mat{A}^{(n)} \gets \arg\min_{\mat{A} \in \bb{R}^{I_n \times R}} \| \mat{Z}^{(n)}_{\mc{F}_n}  \mat{A}^\intercal - \mat{X}_{(n)}^{\mc{F}_n} \|_F^2,
\end{equation}
by
\begin{align*} 
\mat{A}^{(n)}_{(t+1)} \leftarrow \mat{A}^{(n)}_{(t)} - \frac{\alpha_{t}}{|\mc{F}_n|} \left( \mat{A}^{(n)}_{(t)} (\mat{Z}^{(n)}_{\mc{F}_n} )^\intercal \mat{Z}^{(n)}_{\mc{F}_n} - \mat{X}_{(n)}^{\mc{F}_n} \mat{Z}^{(n)}_{\mc{F}_n} \right).
\end{align*}
The specific algorithm is listed in \Cref{alg:fu}. 

\begin{savenotes}
\begin{algorithm}
	\caption{BrasCPD \cite{fu2020BlockRandomizedStochastic}}
	\label{alg:fu}
	\begin{algorithmic}[1]\footnotesize
		\Function{$\{\mat{A}^{(n)}\}_{n=1}^N$= BrasCPD}{$\tensor{X}, R, |\mc{F}_n|, \{ \mat{A}^{(n)}_{(0)} \}_{n=1}^{N}, \{\alpha^{t}\}$} 
		
		\Comment{$N$-way tensor $\tensor{X} \in \bb{R}^{I_1 \times \cdots \times I_N}$;}
		
		\Comment{rank $R$, sample size $|\mc{F}_n|$;}
		
		\Comment{initialization $\{ \mat{A}^{(n)}_{(0)} \}_{n=1}^{N}$, step size $\{\alpha^{t}\}_{t=0,1, \cdots}$}
		\State $t\leftarrow 0$
		\Repeat
		\State Uniformly sample $n$ from $\{1,\cdots,N\}$
		\State \label{line:fu_samp_fn} Sample $\mc{F}_n$ uniformly from $\{1,\cdots, J_n \}$\footnote{The codes from \cite{fu2020BlockRandomizedStochastic} show that the specific implementation is actually the same as what are done in \Cref{alg:skr} with uniform sampling. However, for uniform sampling, we can indeed first determine $\mc{F}_n$ and then find $\vect{idx}$  by \eqref{eq:map}. But, for our importance sampling, this way is unpractical.} 
		\State Compute $\mat{G}^{(n)}_{(t)} = \frac{1}{|\mc{F}_n|} \left( \mat{A}^{(n)}_{(t)} (\mat{Z}^{(n)}_{\mc{F}_n})^\intercal \mat{Z}^{(n)}_{\mc{F}_n} - \mat{X}_{(n)}^{\mc{F}_n} \mat{Z}^{(n)}_{\mc{F}_n} \right) $
		\State Update $\mat{A}^{(n)}_{(t+1)} \leftarrow \mat{A}^{(n)}_{(t)} - \alpha_{t} \mat{G}^{(n)}_{(t)}$, and $\mat{A}^{(n')}_{(t+1)} \leftarrow \mat{A}^{(n')}_{(t)}$ for $n'\neq n$
		\State $t\leftarrow t+1$
		\Until{some stopping criterion is reached}
		\State \Return $\{\mat{A}^{(n)}\}_{n=1}^N$
		\EndFunction
	\end{algorithmic}
\end{algorithm}
\end{savenotes}

In addition, the following two definitions are also necessary throughout the rest of this paper.
\begin{definition}
	We say $\vect{p} \in [0,1]^N$, i.e., an $N$ dimensional vector with the entries being in $[0, 1]$, is a \emph{probability distribution} if 
	$\sum_{i=1}^N p_i = 1$.
\end{definition}

\begin{definition}
	For a random variable $\xi \in [N]$, we say \emph{$\xi \sim \text{\sc multinomial}(\vect{p})$} if $\vect{p} \in [0,1]^N$ is a probability distribution and ${\sf Pr}(\xi = i) = p_i$.
\end{definition}

\section{Sampling Strategies and Proposed Method}
\label{sec:samp_alg}
In this section, we first present two importance sampling strategies to select the mini-batch for
SGD method based on the leverage scores or the squared Euclidean norms about rows of the
KRP matrix, and then present the
doubly randomized variant of CP-ALS. 
Moreover, its adaptive step size version is also presented, which can 
eliminate the challenges in adjusting the parameters and the non-convergence of the algorithm owing to improper step sizes.

\subsection{Sampling strategies}
\label{sec:efficient_sampling}
We begin with some definitions.
\begin{definition}[Leverage Scores \cite{drineas2012FastApproximation}]
	\label{def:leverages_scores}
	Let $\mat{A} \in \bb{R}^{m \times n}$ with $m > n$, and let $\mat{Q} \in \bb{R}^{m \times n}$ be any orthogonal basis for the column space of $\mat{A}$.
	The \emph{leverage score} of the $i$-th row of $\mat{A}$ is given by
	\begin{equation*}
	\ell_i(\mat{A}) = \|\mat{Q}(i,:)\|_2^2.
	\end{equation*}
\end{definition}

\begin{definition}[Leveraged-based Probability Distribution  \cite{woodruff2014SketchingTool}]
	\label{def:leverages_scores_sampling}
	Let $\mat{A} \in \bb{R}^{m \times n}$ with $m > n$.
	We say a probability distribution $\vect{p}=[p_1, \cdots, p_{m} ]^\intercal$ is a \emph{leveraged-based probability distribution} for $\mat{A}$ if $p_i \ge \beta \frac{\ell_i(\mat{A})}{n}$ with $0 < \beta \le 1$ and $i \in [m]$.
\end{definition}

\begin{definition}[Euclidean-based Probability Distribution]
	\label{def:euclidean_norms_sampling}
	Let $\mat{A} \in \bb{R}^{m\times n}$ with $m > n$.
	We say a probability distribution $\vect{p}=[p_1, \cdots, p_{m} ]^\intercal$ is an \emph{Euclidean-based probability distribution} for $\mat{A}$ if $p_i \ge \beta \|\mat{A}(i, :)\|^2_2 / \| \mat{A} \|_F^2$ with $0 < \beta \le 1$ and $i \in [m]$.
\end{definition}

Since it is expensive to compute the leverage scores of a KRP matrix directly, Cheng et al. \cite{cheng2016SPALSFast} presented their upper bounds.

\begin{lemma}[Leverage Score Bound for KRP Matrix \cite{cheng2016SPALSFast}] 
	\label{lem:lev_bound}
	For matrices $\mat{A}^{(k)} \in \bb{R}^{I_k \times R}$ with $I_k > R$ for $k=1, \cdots, K$, let $\ell_{i_k}$ be the leverage score of the $i_k$-th row of $\mat{A}^{(k)}$. Then, for the KRP matrix $\mat{Z} = \mat{A}^{(1)} \odot \mat{A}^{(2)} \odot \cdots \odot \mat{A}^{(K)}$, the leverage score $\ell_{i_1, \cdots, i_K}$ of its $j$-th row corresponding to $\{i_1, \cdots, i_K\}$ satisfies
	\begin{equation*}
	\ell_{i_1, \cdots, i_K} \le \prod_{k=1}^{K} \ell_{i_k}.
	\end{equation*}
\end{lemma}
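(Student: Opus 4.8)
The plan is to reduce everything to a statement about the diagonal entries of projection matrices. Recall that for a matrix $\A^{(k)}$ of full column rank $R$, the leverage score of row $i$ equals the $i$-th diagonal entry of the orthogonal projector $\Mx{P}^{(k)} = \A^{(k)}(\Ak{k}'\Ak{k}")^{-1}\Ak{k}'$, i.e. $\ell_i^{(k)} = \V{e}_i^\intercal \Mx{P}^{(k)} \V{e}_i = \big(\A^{(k)}(\Ak{k}'\Ak{k}")^{-1}\Ak{k}'\big)_{ii}$. (When $I_k < R$ the matrix is not full column rank, so one uses the pseudoinverse and $\Mx{P}^{(k)}$ is the projector onto the column space; the formula $\ell_i^{(k)} = \|\Mx{Q}^{(k)}(i,:)\|_2^2$ from \Cref{def:leverages_scores} still makes sense with $\Mx{Q}^{(k)}$ an orthonormal basis for that column space.) Similarly, writing $\Mx{G} = (\A^{(1)}\odot\cdots\odot\A^{(K)})$, the target leverage score is $\ell_{i_1,\dots,i_K} = \big(\Mx{G}(\Mx{G}^\intercal\Mx{G})^{+}\Mx{G}^\intercal\big)_{jj}$ where $j$ is the multi-index $(i_1,\dots,i_K)$.

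The key algebraic fact I would invoke is the mixed-product property of the Khatri--Rao and Kronecker products: $\Mx{G}^\intercal\Mx{G} = (\A^{(1)\intercal}\A^{(1)}) \circledast \cdots \circledast (\A^{(K)\intercal}\A^{(K)})$, the Hadamard product of the Gram matrices. Denote $\Mx{W} = \Mx{G}^\intercal\Mx{G}$ and $\Mx{W}^{(k)} = \Ak{k}'\Ak{k}"$. Then the $j$-th row of $\Mx{G}$ is $\V{g}_j = \A^{(1)}(i_1,:) \circledast \cdots \circledast \A^{(K)}(i_K,:)$, and
\begin{displaymath}
\ell_{i_1,\dots,i_K} = \V{g}_j^\intercal \Mx{W}^{+} \V{g}_j.
\end{displaymath}
The product-bound claim is therefore equivalent to
\begin{displaymath}
\V{g}_j^\intercal \Big( \Mx{W}^{(1)} \circledast \cdots \circledast \Mx{W}^{(K)} \Big)^{+} \V{g}_j \;\le\; \prod_{k=1}^K \A^{(k)}(i_k,:)\, \big(\Mx{W}^{(k)}\big)^{+} \A^{(k)}(i_k,:)^\intercal.
\end{displaymath}

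The heart of the argument, and the step I expect to be the main obstacle, is the inequality $\Mx{M}^{+} \preceq \Mx{N}^{+}$ in a suitable sense, or rather the quadratic-form comparison relating the Hadamard product of the Grams to the Kronecker product of the Grams. The clean route is: the Hadamard product $\Mx{W}$ is a principal submatrix of the Kronecker product $\widehat{\Mx W} = \Mx{W}^{(1)} \otimes \cdots \otimes \Mx{W}^{(K)}$ (the rows/columns indexed by the ``diagonal'' multi-indices $(r,r,\dots,r)$), and likewise $\V{g}_j$ is the corresponding subvector of $\widehat{\V g}_j = \A^{(1)}(i_1,:)^\intercal \otimes \cdots \otimes \A^{(K)}(i_K,:)^\intercal$. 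A standard fact about principal submatrices and generalized inverses (Schur-complement monotonicity, cf. the Cauchy interlacing / restriction argument for quadratic forms with the pseudoinverse) gives $\V{g}_j^\intercal \Mx{W}^{+} \V{g}_j \le \widehat{\V g}_j^\intercal \widehat{\Mx W}^{+} \widehat{\V g}_j$, provided $\V{g}_j$ lies in the range of $\Mx{W}$ (which it does, being a row of $\Mx{G}$). Finally, by the mixed-product property for the Moore--Penrose inverse of a Kronecker product, $\widehat{\Mx W}^{+} = (\Mx{W}^{(1)})^{+} \otimes \cdots \otimes (\Mx{W}^{(K)})^{+}$, so
\begin{displaymath}
\widehat{\V g}_j^\intercal \widehat{\Mx W}^{+} \widehat{\V g}_j = \prod_{k=1}^K \A^{(k)}(i_k,:)\, (\Mx{W}^{(k)})^{+}\, \A^{(k)}(i_k,:)^\intercal = \prod_{k=1}^K \ell_{i_k}^{(k)},
\end{displaymath}
which is exactly the claimed bound. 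I would carry out the steps in that order: (1) express all leverage scores as projector-diagonal quadratic forms; (2) apply the mixed-product identity to identify $\Mx{G}^\intercal\Mx{G}$ as a principal submatrix of a Kronecker product; (3) prove the subvector/pseudoinverse quadratic-form monotonicity lemma, which is the delicate part because of the pseudoinverse and the need to check the range condition; (4) split the Kronecker quadratic form into the product over $k$. A cleaner alternative for step (3), avoiding pseudoinverses entirely, is to first establish the full-rank case $I_k \ge R$ using honest inverses and the identity $\Mx{Q}^{(1)} \circledast \cdots \circledast \Mx{Q}^{(K)}$ having orthonormal-ish columns bounded appropriately, then note the statement's hypothesis $I_k < R$ forces us to interpret $\ell_i^{(k)}$ via \Cref{def:leverages_scores} with the column-space basis, reducing again to a lower-dimensional full-rank instance; but the Kronecker-submatrix argument above is the most transparent.
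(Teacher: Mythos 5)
The paper never proves this lemma --- it is imported verbatim from \cite{chengSPALSFastAlternating2016} --- so there is no in-paper argument to compare against; judged on its own, your proposal is correct in all essentials and is the standard argument. Steps (1), (2) and (4) are fine: leverage scores are diagonal entries of the orthogonal projector onto the column space, the Gram matrix of the Khatri--Rao product is the Hadamard product of the factor Grams and hence the principal submatrix of $\widehat{\mathbf W}=\mathbf W^{(1)}\otimes\cdots\otimes\mathbf W^{(K)}$ indexed by the diagonal multi-indices $(r,\dots,r)$, and $\widehat{\mathbf W}^{+}=(\mathbf W^{(1)})^{+}\otimes\cdots\otimes(\mathbf W^{(K)})^{+}$ makes the Kronecker quadratic form split into $\prod_k \ell_{i_k}^{(k)}$. (The hypothesis $I_k<R$ in the statement is presumably a typo for $I_k\ge R$; your projector/pseudoinverse formulation is insensitive to this.)

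The one place that needs repair is step (3): you check the wrong range condition. The general claim ``$\mathbf W=\mathbf S^\intercal\widehat{\mathbf W}\mathbf S$ a principal submatrix, $\mathbf v=\mathbf S^\intercal\widehat{\mathbf v}$, and $\mathbf v\in\range(\mathbf W)$ imply $\mathbf v^\intercal\mathbf W^{+}\mathbf v\le\widehat{\mathbf v}^\intercal\widehat{\mathbf W}^{+}\widehat{\mathbf v}$'' is false: take $\widehat{\mathbf W}=\left(\begin{smallmatrix}1&1\\1&1\end{smallmatrix}\right)$, $\mathbf S=\mathbf e_1$, $\widehat{\mathbf v}=(1,0)^\intercal$; then $\mathbf v^\intercal\mathbf W^{+}\mathbf v=1$ while $\widehat{\mathbf v}^\intercal\widehat{\mathbf W}^{+}\widehat{\mathbf v}=1/4$, even though $\mathbf v\in\range(\mathbf W)$. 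The hypothesis that actually saves the step is $\widehat{\mathbf v}\in\range(\widehat{\mathbf W})$: writing $\widehat{\mathbf W}=\mathbf B^\intercal\mathbf B$ and $\widehat{\mathbf v}=\mathbf B^\intercal\mathbf u$ with $\mathbf u\in\range(\mathbf B)$, the two quadratic forms become $\|\mathbf u\|^2$ and $\|\mathbf P_{\range(\mathbf B\mathbf S)}\mathbf u\|^2$, and the inequality follows from $\range(\mathbf B\mathbf S)\subseteq\range(\mathbf B)$. In your application this condition does hold, since $\widehat{\mathbf g}_j$ is (the transpose of) a row of $\A^{(1)}\otimes\cdots\otimes\A^{(K)}$ and therefore lies in its row space, which equals $\range(\widehat{\mathbf W})$. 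The cleanest packaging makes step (3) disappear altogether: the Khatri--Rao product is the column submatrix of the Kronecker product obtained by keeping the columns indexed by $(r,\dots,r)$, so its column space is contained in that of the Kronecker product, hence $\mathbf P_{\odot}\preceq\mathbf P_{\otimes}$ in the semidefinite order, hence every diagonal entry (i.e.\ every leverage score) of the former is at most the corresponding one of the latter; and $\mathbf P_{\otimes}=\mathbf P^{(1)}\otimes\cdots\otimes\mathbf P^{(K)}$ has diagonal entries exactly $\prod_k\ell_{i_k}^{(k)}$.
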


Hence, 
the leveraged-based sampling probability for the $j$-th row of the above KRP matrix can be set to be
\begin{equation}
\label{eq:krp_lev_prob}
p_j = \frac{1}{R^{K}} \prod_{k=1}^{K} \ell_{i_k}.
\end{equation}
In this case, $\beta$ in \Cref{def:leverages_scores_sampling} is equal to $1/R^{K-1}$.
Furthermore, Larsen and Kolda \cite{larsen2020PracticalLeverageBased} showed that sampling the $j$-th row of $\mat{Z}$ with the above probability can be carried out by sampling the corresponding rows from factor matrices with suitable probabilities independently. The specific result is given in the following lemma. 

\begin{lemma}[\cite{larsen2020PracticalLeverageBased}]
	\label{lem:KRP_sample}
	Let $\mat{A}^{(k)} \in \bb{R}^{I_k \times R}$ for $k=1, \cdots, K$, and $\vect{\ell}(\mat{A}^{(k)})$ be the vector of leverage scores for $\mat{A}^{(k)}$.
	Let
	\begin{equation*}
	i_k \sim \text{\sc multinomial}(\vect{\ell}(\mat{A}^{(k)}) / R)
	~~ \text{for} ~~ k=1, \cdots, K.
	\end{equation*}
	Then, the probability of selecting the multi-index $\{i_1, \cdots, i_K\}$ is $p_j$ in \eqref{eq:krp_lev_prob}.
\end{lemma}

For the case on the squared
Euclidean norms, we have the similar conclusions. That is, the Euclidean-based sampling probability for the $j$-th row of the above KRP matrix can be set to be
\begin{equation}
\label{eq:krp_euc_prob}
p_j = \prod_{k=1}^K \frac{\|\mat{A}^{(k)} (i_{k}, :)\|^2_2}{\| \mat{A}^{(k)} \|_F^2},
\end{equation}
and to sample the $j$-th row of $\mat{Z}$  with the above probability, it suffices to sample the corresponding rows from factor matrices with suitable probabilities independently. These conclusions are guaranteed by the following two lemmas, whose proofs are similar to those of \Cref{lem:lev_bound,lem:KRP_sample}. So we omit them here.

\begin{lemma}
	\label{lem:euc_bound}
	Let $\mat{A}^{(k)} \in \bb{R}^{I_k \times R}$ with $I_k > R$ for $k=1, \cdots, K$. For the KRP matrix $\mat{Z} = \mat{A}^{(1)} \odot \mat{A}^{(2)} \odot \cdots \odot \mat{A}^{(K)}$, the squared Euclidean norms of its $j$-th row corresponding to $\{i_1, \cdots, i_K\}$ satisfies
	\begin{equation*}
	\| \mat{Z} (j, :) \|^2_2 \leq \prod_{k=1}^K \| \mat{A}^{(k)} (i_{k}, :) \|^2_2 .
	\end{equation*}
\end{lemma}

\begin{lemma}
	\label{lem:KRP_sample_euc}
	Let $\mat{A}^{(k)} \in \bb{R}^{I_k \times R}$ for $k=1, \cdots, K$, and  $\vect{p}_{k}$ be the Euclidean-based probability distribution for $\mat{A}^{(k)}$ with $\beta = 1$, i.e., $\vect{p}_{k}(i) = \|\mat{A}^{(k)}(i, :)\|^2_2 / \| \mat{A}^{(k)} \|_F^2$ with $i \in [I_k]$.
	Let
	\begin{equation*}
	i_k \sim \text{\sc multinomial}(\vect{p}_{k})
	~~\text{for}~~ k=1, \cdots, K.
	\end{equation*}
	Then, the probability of selecting the multi-index $\{i_1, \cdots, i_K\}$ is $p_j$ in \eqref{eq:krp_euc_prob}. 
\end{lemma}

In the following, we introduce how to find the mini-batches and compute their sampling probabilities based on the above Leveraged-based and Euclidean-based probability distributions, respectively. 

We first sample $|\mc{F}_n|$ rows from each $\{ \mat{A}^{(k)} \}_{k=1,k \ne n}^{N}$ using the leveraged-based probability distribution for $\mat{A}^{(k)}$ with $\beta = 1$, i.e., $\vect{p}_k=\frac{\vect{\ell}(\mat{A}^{(k)})}{R}$. 
Thus, we can get the $\vect{idx}$:
\begin{equation}
\label{eq:block_map}
\begin{bmatrix}
\{ i_1^{(j_1)} & \cdots & i_{n-1}^{(j_1)} & i_{n+1}^{(j_1)} & \cdots & i_N^{(j_1)} \} \\
\{ i_1^{(j_2)} & \cdots & i_{n-1}^{(j_2)} & i_{n+1}^{(j_2)} & \cdots & i_N^{(j_2)} \} \\
\vdots & \vdots & \vdots & \vdots & \vdots & \vdots \\
\{ i_1^{(j_{|\mc{F}_n|})} & \cdots & i_{n-1}^{(j_{|\mc{F}_n|})} & i_{n+1}^{(j_{|\mc{F}_n|})} & \cdots & i_N^{(j_{|\mc{F}_n|})} \} \\
\end{bmatrix}.
\end{equation}
Then, based on the above set of tuples, using \eqref{eq:map} and \Cref{lem:KRP_sample}, we can obtain the index set $\mc{F}_n = \{ j_1, j_2, \cdots, j_{|\mc{F}_n|} \}$ of the sampled $|\mc{F}_n|$ rows of $\mat{Z}^{(n)}$, and the corresponding sampling probabilities 
\begin{equation}
\label{eq:block_lev_prob}
\vect{p}_{\mc{F}_n}=[p_{j_1}, \cdots, p_{j_{|\mc{F}_n|}}]^\intercal,
\end{equation} 
where $p_{j_f} = \frac{\bar{\ell}_{j_f}(\mat{Z}^{(n)})}{R^{N-1}}$ and $\bar{\ell}_{j_f}(\mat{Z}^{(n)}) = \prod_{k=1,k \ne n}^N \ell_{i_{k}^{(j_f)}}(\mat{A}^{(k)})$. Moreover, with the above $\vect{idx}$, we can find $\mat{Z}^{(n)}_{\mc{F}_n}$ and $ \mat{X}_{(n)}^{\mc{F}_n}$ using \Cref{alg:skr}.

The procedure for the case on the Euclidean-based probability distribution is similar except that the above $\vect{p}_k$ is replaced by $\vect{p}_{k}$ in \Cref{lem:KRP_sample_euc} and the above $p_{j_f}$ is replaced by $p_{j_f} = \prod_{k=1,k \ne n}^N \frac{\| \mat{A}^{(k)} (i_{k}^{(j_f)}, :) \|^2_2}{\| \mat{A}^{(k)} \|_F^2}$. 

Based on the above discussions, we have the following algorithmic framework, i.e., \Cref{alg:KRPSamp1}.

\begin{algorithm}
	\caption{SKRP-ST with importance sampling (SKRP-ST-I)}
	\label{alg:KRPSamp1}
	\begin{algorithmic}[1]\footnotesize
		\Function{$[\mat{Z}^{(n)}_{\mc{F}_n}, \mat{X}_{(n)}^{\mc{F}_n}, \vect{p}_{\mc{F}_n}]$ = SKRP-ST-I}{$n$, $|\mc{F}_n|$, $\{\vect{p}_k\}_{k=1, k \ne n}^{N}, \{\mat{A}^{(k)}\}_{k=1, k \ne n}^{N}$}
		\For{$k=1,\cdots,n-1,n+1,\cdots,N$}
		\State $\vect{idx}(:, k) =$ \textsc{Randsample}($I_k, |\mc{F}_n|, true, \vect{p}_k$) 
		\EndFor
		\State$[\mat{Z}^{(n)}_{\mc{F}_n}, \mat{X}_{(n)}^{\mc{F}_n}]$ = \textsc{SKRP-ST}($n$, $\vect{idx}$, $\{\mat{A}^{(k)}\}_{k=1, k \ne n}^{N}$)
		\State $\vect{p}_{\mc{F}_n} \leftarrow$ \eqref{eq:block_lev_prob} with leveraged/Euclidean-based probability distribution
		\State \Return $\mat{Z}^{(n)}_{\mc{F}_n}, \mat{X}_{(n)}^{\mc{F}_n}, \vect{p}_{\mc{F}_n}$
		\EndFunction
	\end{algorithmic}
\end{algorithm}

\begin{remark}
	The two importance sampling 
	probability distributions  considered above are empirical. 
	In \Cref{thm:optimal_prob} below, we will present a theoretical optimal one. 
	In addition,
	the mini-batches here are constructed by sampling rows first and then combining them into blocks. We can also obtain them by blocking the rows first and then sampling blocks. 
\end{remark}

\subsection{Proposed method}
\label{sec:proposed_algorithm}
We first give the 
stochastic gradient $\mat{G}^{(n)}_{(t)}$, corresponding to the sampling strategies described in \Cref{sec:efficient_sampling},

\begin{align} 
\label{eq:gradient1}
\mat{G}^{(n)}_{(t)} = \frac{1}{|\mc{F}_n| J_n} \left(\mat{A}^{(n)}_{(t)} (\mat{D}(\mat{Z}^{(n)}_{\mc{F}_n})^\intercal ) \mat{Z}^{(n)}_{\mc{F}_n} - \mat{D} \mat{X}_{(n)}^{\mc{F}_n} \mat{Z}^{(n)}_{\mc{F}_n} \right),
\end{align}
where $\mat{D}={\diag}[\frac{1}{p_{j_1}}, \cdots, \frac{1}{p_{j_{|\mc{F}_n|}}}]$
is from \eqref{eq:block_lev_prob}.
Then, the latent factor matrices can be updated by 
\begin{equation}
\label{eq:proposed}
\mat{A}^{(n)}_{(t+1)} \leftarrow \mat{A}^{(n)}_{(t)} - \alpha_{t}\mat{G}^{(n)}_{(t)},~~n=1,\cdots,N.
\end{equation}



Therefore, we can propose our method in \Cref{alg:BR-SGD}. Like \cite{fu2020BlockRandomizedStochastic}, we call it {\it \textbf{B}lock-\textbf{Ra}ndomized \textbf{W}eighted \textbf{S}GD for \textbf{CPD}} (\texttt{BrawsCPD}). 
Similarly, we call the \texttt{BrawsCPD} with the two sampling strategies in \Cref{sec:efficient_sampling} the
\texttt{LBrawsCPD} and \texttt{EBrawsCPD}, respectively.

\begin{algorithm}
	\caption{\texttt{BrawsCPD} 
	}
	\label{alg:BR-SGD}
	\begin{algorithmic}[1]\footnotesize
		\Function{$\{\mat{A}^{(n)}\}_{n=1}^N$= BrawsCPD}{$\tensor{X}, R, |\mc{F}_n|, \{ \mat{A}^{(n)}_{(0)}\}_{n=1}^{N}, \{\alpha_{t}\}$} 
		
		
		
		\State $t\leftarrow 0$
		\Repeat
		\State Uniformly sample $n$ from $\{1,\cdots,N\}$
		\State $[\mat{Z}^{(n)}_{\mc{F}_n}, \mat{X}_{(n)}^{\mc{F}_n}, \vect{p}_{\mc{F}_n}]$ = \textsc{SKRP-ST-I}($n$, $|\mc{F}_n|$, $\{\vect{p}_k\}_{k=1, k \ne n}^{N}$, $\{\mat{A}^{(N)}\}_{k=1, k \ne n}^{N}$)
		\State Form the stochastic gradient $\mat{G}^{(n)}_{(t)} \leftarrow$ \eqref{eq:gradient1} 
		\State Update $\mat{A}^{(n)}_{(t+1)} \leftarrow \eqref{eq:proposed}$, $\mat{A}^{(n')}_{(t+1)} \leftarrow \mat{A}^{(n')}_{(t)}$ for $n'\neq n$
		\State $t\leftarrow t+1$
		\Until{some stopping criterion is reached}
		\State \Return $\{\mat{A}^{(n)}\}_{n=1}^N$
		\EndFunction
	\end{algorithmic}
\end{algorithm}

To avoid running the step size schedule, similar to \cite{fu2020BlockRandomizedStochastic}, we also give an adaptive step size scheme with the following updating rule:
\begin{subequations}
	\begin{align}
	\label{eq:etaada} [\bm{\eta}_{(t)}^{(n)}]_{i,r} &\leftarrow \frac{\eta}{\left(b + \sum_{t'=1}^{t}[\mat{G}^{(n)}_{(t)}]_{i,r}^2\right)^{1/2}},~~i \in [I_n],~r \in [R], \\
	\label{eq:Aada} \mat{A}^{(n)}_{(t+1)}& \leftarrow \mat{A}^{(n)}_{(t)} - \bm{\eta}_{(t)}^{(n)} \circledast \mat{G}^{(n)}_{(t)},\\
	\mat{A}^{(n')}_{(t+1)}&\leftarrow \mat{A}^{(n')}_{(t)},~~n' \neq n,
	\end{align}
\end{subequations}
where $\eta,b > 0$. 
Here, $b>0$ is introduced to prevent division by zero. In practice, setting $b=0$ does not hurt the performance. 
We summary the corresponding algorithm in \Cref{alg:AdawsCPD}, which is named as \texttt{AdawsCPD}. Meanwhile, we name the \texttt{AdawsCPD} with the two sampling strategies in \Cref{sec:efficient_sampling} the \texttt{LAdawsCPD} and \texttt{EAdawsCPD}, respectively.

\begin{algorithm}
	\caption{\texttt{AdawsCPD} 
	}
	\label{alg:AdawsCPD}
	\begin{algorithmic}[1]\footnotesize
		\Function{$\{\mat{A}^{(n)}\}_{n=1}^N$= AdawsCPD}{$\tensor{X}, R, {|\mc{F}_n|}, \{ \mat{A}^{(n)}_{(0)} \}_{n=1}^{N}$} 
		
		\State $t\leftarrow 0$
		\Repeat
		\State Uniformly sample $n$ from $\{1,\cdots,N\}$
		\State $[\mat{Z}^{(n)}_{\mc{F}_n}, \mat{X}_{(n)}^{\mc{F}_n}, \vect{p}_{\mc{F}_n}]$ = \textsc{SKRP-ST-I}($n$, $|\mc{F}_n|$, $\{\vect{p}_k\}_{k=1, k \ne n}^{N}$, $\{\mat{A}^{(N)}\}_{k=1, k \ne n}^{N}$)
		\State Form the stochastic gradient $\mat{G}^{(n)}_{(t)} \leftarrow$ \eqref{eq:gradient1}
		\State Determine the step size $\bm{\eta}_{(t)}^{(n)} \leftarrow$ \eqref{eq:etaada}
		\State Update $\mat{A}^{(n)}_{(t+1)} \leftarrow$ \eqref{eq:Aada}, $\mat{A}^{(n')}_{(t+1)} \leftarrow \mat{A}^{(n')}_{(t)}$ for $n'\neq n$
		\State $t\leftarrow t+1$
		\Until{some stopping criterion is reached}
		\State \Return $\{\mat{A}^{(n)}\}_{n=1}^N$
		\EndFunction
	\end{algorithmic}
\end{algorithm}

\section{Theoretical Properties}
\label{sec:Convergence_Properties}

In this section, we provide some theoretical results of the proposed method, which mainly include the unbiasedness of stochastic gradient, the error analysis of method, and the analysis on variance of stochastic gradient. For simplicity, we will often use the shorthand notation $f(\vect{\theta})$ to denote $f(\mat{A}^{(1)},\cdots,\mat{A}^{(N)}) $,
where $\vect{\theta} =[\mat{A}^{(1)}, \cdots, \mat{A}^{(N)}]$. 
Thus, considering the definition of Frobenius norm, \eqref{eq:certain_criterion} can be rewritten as 
\begin{equation}
\label{eq:sgd_cri2}
\min_{\{\mat{A}^{(n)}\in \bb{R}^{I_n \times R}\}_{n=1}^{N}}~(1/J_n) ~ \sum_{i} f_{i}\left(\vect{\theta}\right),
\end{equation}
where 
$f_{i}(\vect{\theta})=\frac{J_n}{2} \| \mat{Z}^{(n)}(i, :) \mat{A}^\intercal - \mat{X}_{(n)}^\intercal (i, :)\|^2_F$.

\subsection{Unbiasedness of stochastic gradient}\label{sec:Unbias}
For convenience, we define $\xi_{(t)} \in \{1,\cdots,N\}$ and $\zeta_{(t)} \subseteq \{1,\cdots,J_{\xi_{(t)}} \}$ as the random variables (r.v.s) responsible for selecting the mode and fibers in the $t$-th iteration, respectively.

\begin{theorem}[Unbiased Gradient]
	\label{thm:unbias}
	Denote $\mc{B}_{(t)}$ as the filtration generated by the r.v.s $$\{\xi_{(0)},\zeta_{(0)}, \xi_{(1)},\zeta_{(1)}, \cdots,\xi_{(t-1)},\zeta_{(t-1)} \}$$ 
	such that the $t$-th iteration $\vect{\theta}_{(t)}$ is determined conditioned on $\mc{B}_{(t)}$. Then the stochastic gradient in \eqref{eq:gradient1} is the unbiased estimate of the full gradient with respect to (w.r.t.) $\mat{A}^{(\xi_{(t)})}$, i.e.,
	\begin{equation} 
	\label{eq:factunbias}
	\bb{E}_{\zeta_{(t)}}\left[ \mat{G}^{(\xi_{(t)})}_{(t)}~|~\mc{B}_{(t)},\xi_{(t)} \right] = \nabla_{\mat{A}^{(\xi_{(t)})}} f({\vect{\theta}}_{(t)}).
	\end{equation}
\end{theorem}

\begin{proof}
    Note that the stochastic gradient in \eqref{eq:gradient1} can be rewritten as
    \begin{align*} 
    \mat{G}^{(n)}_{(t)} 
    &= \frac{1}{|\mc{F}_n| J_n} \sum_{f=1}^{|\mc{F}_n|} \frac{1}{p_{j_f}} \nabla_{\mat{A}^{(n)}} f_{j_f}(\vect{\theta}_{(t)}).
    \end{align*}
	Hence, we have
	\begin{align*}
	\bb{E}_{\zeta_{(t)}}\left[ \mat{G}^{(\xi_{(t)})}_{(t)}~|~\mc{B}_{(t)},\xi_{(t)} \right] &= \frac{1}{|\mc{F}_n| J_n} \sum_{f=1}^{|\mc{F}_n|} \bb{E} \left(\frac{1}{p_{j_f}}\nabla_{\mat{A}^{(\xi_{(t)})}} f_{j_f}(\vect{\theta}_{(t)}) \right) \\
	&= \frac{1}{|\mc{F}_n| J_n} \sum_{f=1}^{|\mc{F}_n|} \sum_{j_f \in [J_n]} \left(\frac{1}{p_{j_f}} p_{j_f} \nabla_{\mat{A}^{(\xi_{(t)})}} f_{j_f}(\vect{\theta}_{(t)})\right ) \\
	&= \frac{1}{J_n} \sum_{j_f \in [J_n]} \nabla_{\mat{A}^{(\xi_{(t)})}} f_{j_f}(\vect{\theta}_{(t)})
	= \nabla_{\mat{A}^{(\xi_{(t)})}} f(\vect{\theta}_{(t)}).
	\end{align*} 
	So, 
	the desired result holds. 
\end{proof}

\begin{remark}
	Based on the unbiasedness of the stochastic gradient given in \eqref{eq:gradient1}, under some assumptions presented in \cite{fu2020BlockRandomizedStochastic}, along the same line of proofs of Propositions 1 and 3 in \cite{fu2020BlockRandomizedStochastic}, we can show that the solution sequence produced by \texttt{BrawsCPD} satisfies 
	\begin{equation*}
	\liminf_{t \rightarrow \infty} \bb{E} \left[ \| \nabla f(\vect{\theta}_{(t)}) \|_F^2 \right] = 0,
	\end{equation*}
	and the solution sequence produced by \texttt{AdawsCPD} satisfies
	\begin{equation*}
	{\sf Pr}\left( \liminf_{t \rightarrow \infty} \| \nabla f(\vect{\theta}_{(t)}) \|_F^2 = 0 \right) = 1.
	\end{equation*}
	That is, these sequences converge to the corresponding stationary points.
\end{remark}

\subsection{Error analysis}
To investigate the 
error analysis of the proposed method, we need some preparations. They are presented in the following two lemmas, which are mainly from the facts that the objective function $f(\vect{\theta})$ is \emph{block multiconvex} \cite{xu2013BlockCoordinate} and has \emph{block-wise Lipschitz continuous gradient} \cite{fu2020BlockRandomizedStochastic}. This is because $f(\vect{\theta})$ w.r.t. $\mat{A}^{(n)}$ is a plain least squares fitting criterion.

\begin{lemma}[Block Multiconvex \cite{xu2013BlockCoordinate}]
	\label{lem:strong_convex}
	Suppose that $\mat{Z}^{(n)}$ has full column rank. Then, for $\hat{\vect{\theta}}$, $\bar{\vect{\theta}}$ and the mode $n \in \{1,\cdots, N \}$, there exists a constant $\mu^{(n)}$ such that 
	\begin{equation*}
	f(\hat{\vect{\theta}}) \geq f(\bar{\vect{\theta}}) + \left< \nabla_{\mat{A}^{(n)}} f(\bar{\vect{\theta}}), \hat{\mat{A}}^{(n)} - \bar{\mat{A}}^{(n)} \right> + \frac{\mu^{(n)}}{2} \| \hat{\mat{A}}^{(n)} - \bar{\mat{A}}^{(n)} \|_F^2,
	\end{equation*}
	where $\hat{\vect{\theta}}$ and $\bar{\vect{\theta}}$ are the same except the $n$-th factor, i.e., $\hat{\mat{A}}^{(i)}= \bar{\mat{A}}^{(i)}$ for $i \neq n$. 
\end{lemma}

\begin{lemma}[Block-wise Lipschitz Continuous Gradient \cite{fu2020BlockRandomizedStochastic}]
	\label{lem:lipschitz}
	For $\hat{\vect{\theta}}$, $\bar{\vect{\theta}}$ and the mode $n \in \{1,\cdots, N \}$, there exists a constant $L^{(n)}$ such that 
	\begin{equation*}
	f(\hat{\vect{\theta}}) \leq f(\bar{\vect{\theta}}) + \left< \nabla_{\mat{A}^{(n)}} f(\bar{\vect{\theta}}), \hat{\mat{A}}^{(n)} - \bar{\mat{A}}^{(n)} \right> + \frac{L^{(n)}}{2} \| \hat{\mat{A}}^{(n)} - \bar{\mat{A}}^{(n)} \|_F^2,
	\end{equation*}
	where $\hat{\vect{\theta}}$ and $\bar{\vect{\theta}}$ are the same except the $n$-th factor, i.e., $\hat{\mat{A}}^{(i)}= \bar{\mat{A}}^{(i)}$ for $i \neq n$. 
\end{lemma}

\begin{remark}
	It is well known that  $\mu^{(n)}$ and $L^{(n)}$ can be chosen as $\lambda_{\min}((\mat{Z}^{(n)})^\intercal \mat{Z}^{(n)})$ and $\lambda_{\max}((\mat{Z}^{(n)})^\intercal \mat{Z}^{(n)})$, respectively. 
	In addition, the assumption of \Cref{lem:strong_convex} can always be satisfied in our problem, and 
	by the monotonicity of strongly convex function, we have 
	\begin{equation}
	\label{eq:convex}
	\left< \nabla_{\mat{A}^{(n)}} f(\hat{\vect{\theta}}) - \nabla_{\mat{A}^{(n)}} f(\bar{\vect{\theta}}), \hat{\mat{A}}^{(n)} - \bar{\mat{A}}^{(n)} \right> \geq \mu^{(n)} \| \hat{\mat{A}}^{(n)} - \bar{\mat{A}}^{(n)} \|_F^2.
	\end{equation}
\end{remark}

\begin{theorem}[Main Theorem]
	\label{thm:conv_unbra}
	Let $\mat{A}_{*}^{(n)}$ for $n=1,\cdots,N$ be the factor matrices derived by the regular CP-ALS, $\tensor{X}_*=\llbracket \mat{A}_*^{(1)},\mat{A}_*^{(2)}, \cdots,\mat{A}_*^{(N)} \rrbracket$ and $\tensor{X}_{true}$ be the true data tensor. 
	Suppose that the updates $\mat{A}^{(n)}_{(t)}$ are bounded for all $n, t$, and the step size $\alpha_{t}$ is fixed as $\alpha$ satisfying $0 < \alpha < \frac{1}{2 \mu}$ with $\mu$ being a constant. Then, after $T$ iterations, the solution sequence produced by \texttt{BrawsCPD} satisfies:
	\begin{align}
	\label{eq:conv}
	\bb{E} \left[ \| \llbracket \mat{A}^{(1)}_{(T)},\mat{A}^{(2)}_{(T)}, \cdots,\mat{A}^{(N)}_{(T)} \rrbracket - \tensor{X}_{true} \|_F^2 \right] 
	&\nonumber \leq \frac{L}{4} \left[ (1-2 \alpha \mu)^T \Delta_{0}^2 + \frac{\alpha M}{2 \mu} \right] \\ 
	&\qquad + \frac{1}{2} \| \tensor{X}_* - \tensor{X}_{true} \|_F^2,
	\end{align}
	where $L$ is a constant, $\Delta_t = \| \mat{A}_{(t)}^{(\xi_{(t)})} - \mat{A}_{*}^{(\xi_{(t)})} \|_F$, and 
	$$M =\max\left\{ \bb{E}_{\zeta_{(t)}}\left[ \| \mat{G}^{(\xi_{(t)})}_{(t)} \|_F^2 ~|~ \mc{B}_{(t)},\xi_{(t)} \right]\right\}$$ 
	with $\mc{B}_{(t)}$, $\xi_{(t)}$ and $\zeta_{(t)}$ being defined as in \Cref{sec:Unbias}.
\end{theorem}

\begin{proof}
	On the one hand, we have
	\begin{equation*}
	\bb{E} \left[ \| \llbracket \mat{A}^{(1)}_{(T)},\mat{A}^{(2)}_{(T)}, \cdots,\mat{A}^{(N)}_{(T)} \rrbracket - \tensor{X}_{true} \|_F^2 \right] = \frac{1}{2} \bb{E} \left[f(\vect{\theta}_{(T)})\right]
	\end{equation*}
	and
	\begin{equation*}
	\bb{E} \left[f(\vect{\theta}_{(T)})\right] = \bb{E} \left[f(\vect{\theta}_{(T)}) - f(\vect{\theta}_{(*)})\right] + \| \tensor{X}_* - \tensor{X}_{true} \|_F^2. 
	\end{equation*}
	On the other hand, using \Cref{lem:lipschitz}, we get
	\begin{equation*}
	f(\vect{\theta}_{(T)}) - f(\vect{\theta}_{(*)}) 
	\leq \left< \nabla_{\mat{A}^{(\xi_{(t)})}} f(\vect{\theta}_{*}), \mat{A}_{(T)}^{(\xi_{(t)})} - \mat{A}_{*}^{(\xi_{(t)})} \right> + \frac{L}{2} \|  \mat{A}_{(T)}^{(\xi_{(t)})} - \mat{A}_{*}^{(\xi_{(t)})} \|_F^2,
	\end{equation*}
	where $L = \max_{t=0,\cdots,\infty} L_{(t)}^{(\xi_{(t)})} < \infty$, which together with
	$\nabla_{\mat{A}^{(\xi_{(t)}})} f(\vect{\theta}_{*}) = 0$ implies 
	\begin{equation*}
	\bb{E} \left[ f(\vect{\theta}_{(T)}) - f(\vect{\theta}_{(*)}) \right] \leq \frac{L}{2} \bb{E} \left[ \Delta_{T}^2 \right].
	\end{equation*}
	As a result,
	\begin{equation*}
	\bb{E} \left[ \| \llbracket \mat{A}^{(1)}_{(T)},\mat{A}^{(2)}_{(T)}, \cdots,\mat{A}^{(N)}_{(T)} \rrbracket - \tensor{X}_{true} \|_F^2 \right]  \leq \frac{L}{4} \bb{E} \left[ \Delta_{T}^2 \right]+ \frac{1}{2}\| \tensor{X}_* - \tensor{X}_{true} \|_F^2. 
	\end{equation*}
	Thus, to get the desired result, it suffices to estimate $\bb{E} \left[ \Delta_{T}^2 \right]$. 
	
	We begin from the expression of $\Delta_{t+1}^2$:
	\begin{align*}
	\Delta_{t+1}^2 &= \| \mat{A}_{(t+1)}^{(\xi_{(t)})} - \mat{A}_{*}^{(\xi_{(t)})} \|_F^2 
	= \| \mat{A}_{(t)}^{(\xi_{(t)})} - \alpha_{t} \mat{G}^{(\xi_{(t)})}_{(t)} - \mat{A}_{*}^{(\xi_{(t)})} \|_F^2 \\
	&= \| \mat{A}_{(t)}^{(\xi_{(t)})} - \mat{A}_{*}^{(\xi_{(t)})} \|_F^2 - 2 \alpha_{t} \left< \mat{G}^{(\xi_{(t)})}_{(t)}, \mat{A}_{(t)}^{(\xi_{(t)})} - \mat{A}_{*}^{(\xi_{(t)})}\right> + \alpha_{t}^2 \| \mat{G}^{(\xi_{(t)})}_{(t)} \|_F^2,
	\end{align*}
	which is from the update formula of \texttt{BrawsCPD} and some algebra.
	Taking the expectation conditioned on the filtration $\mc{B}_{(t)}$ and the chosen mode index $\xi_{(t)}$ gives
	\begin{align*}
	&\bb{E}_{\zeta_{(t)}} \left[ \Delta_{t+1}^2 ~|~ \mc{B}_{(t)},\xi_{(t)} \right] \\
	&\qquad = \bb{E}_{\zeta_{(t)}} \left[ \Delta_t^2 ~|~ \mc{B}_{(t)},\xi_{(t)} \right] - 2 \alpha_{t} \bb{E}_{\zeta_{(t)}} \left[ \left< \mat{G}^{(\xi_{(t)})}_{(t)}, \mat{A}_{(t)}^{(\xi_{(t)})} - \mat{A}_{*}^{(\xi_{(t)})}\right> ~|~ \mc{B}_{(t)},\xi_{(t)} \right] \\
	&\qquad \qquad + \alpha_{t}^2 \bb{E}_{\zeta_{(t)}} \left[ \| \mat{G}^{(\xi_{(t)})}_{(t)} \|_F^2 ~|~ \mc{B}_{(t)},\xi_{(t)} \right] \\
	&\qquad \leq \bb{E}_{\zeta_{(t)}} \left[ \Delta_t^2 ~|~ \mc{B}_{(t)},\xi_{(t)} \right] - 2 \alpha_t \left< \nabla_{\mat{A}^{(\xi_{(t)})}} f(\vect{\theta}_{(t)}), \mat{A}_{(t)}^{(\xi_{(t)})} - \mat{A}_{*}^{(\xi_{(t)})} \right> + \alpha_t^2 M \\
	&\qquad \leq \bb{E}_{\zeta_{(t)}} \left[ \Delta_t^2 ~|~ \mc{B}_{(t)},\xi_{(t)} \right] - 2 \alpha_t \mu \Delta_t^2 + \alpha_t^2 M,
	\end{align*}
	where the first inequality follows from \Cref{thm:unbias} and the definition of $M$, the second inequality is from \eqref{eq:convex}, and $\mu= \max_{t=0,\cdots,\infty} \mu_{(t)}^{(\xi_{(t)})} < \infty$.
	Now, taking the expectation w.r.t $\xi_{(t)}$, we get
	\begin{align*}
	\bb{E}_{\zeta_{(t)},\xi_{(t)}} \left[ \Delta_{t+1}^2 ~|~ \mc{B}_{(t)} \right] 
	&\leq \bb{E}_{\zeta_{(t)},\xi_{(t)}} \left[ \Delta_{t}^2 ~|~ \mc{B}_{(t)} \right] - 2 \alpha_t \mu \bb{E}_{\zeta_{(t)},\xi_{(t)}} \left[ \Delta_{t}^2 ~|~ \mc{B}_{(t)} \right] + \alpha_t^2 M \\
	&= (1-2 \alpha_t \mu) \bb{E}_{\zeta_{(t)},\xi_{(t)}} \left[ \Delta_{t}^2 ~|~ \mc{B}_{(t)} \right] + \alpha_t^2 M.
	\end{align*}
	Further, taking the total expectation w.r.t all random variables in $\mc{B}_{(t)}$ yields
	\begin{equation*}
	\bb{E} \left[ \Delta_{t+1}^2 \right] \leq (1-2 \alpha_t \mu) \bb{E} \left[ \Delta_{t}^2 \right] + \alpha_t^2 M.
	\end{equation*}
	Note that the step size is fixed. Thus, making an induction of $t = T-1$, we obtain
	\begin{equation*}
	\bb{E} \left[ \Delta_{T}^2 \right] \leq (1-2 \alpha \mu)^T \Delta_{0}^2 + \sum_{i=0}^{T-1} (1-2 \alpha \mu)^i \alpha^2 M.
	\end{equation*}
	According to the assumption $0 < 2 \alpha \mu < 1$, we can check that
	\begin{equation*}
	\sum_{i=0}^{T-1} (1-2 \alpha \mu)^i < \sum_{i=0}^{\infty} (1-2 \alpha \mu)^i = \frac{1}{2 \alpha \mu}.
	\end{equation*}
	Therefore,
	\begin{equation*}
	\bb{E} \left[ \Delta_{T}^2 \right] \leq (1-2 \alpha \mu)^T \Delta_{0}^2 + \frac{\alpha M}{2 \mu}.
	\end{equation*}
	Finally, the desired result is derived.
\end{proof}

\begin{remark}
	The assumption on the boundedness of $\mat{A}^{(n)}_{(t)}$ is to guarantee that $L_{(t)}^{(\xi_{(t)})}$ is upper bounded. A simple way to make  $\mat{A}^{(n)}_{(t)}$ bounded is to scale $\mat{A}^{(1)}, \mat{A}^{(2)}, \cdots, \mat{A}^{(N)}$ after each iteration.
\end{remark}

Rewriting \eqref{eq:conv} gives
\begin{align*}
&\bb{E} \left[ \| \llbracket \mat{A}^{(1)}_{(T)},\mat{A}^{(2)}_{(T)}, \cdots,\mat{A}^{(N)}_{(T)} \rrbracket - \tensor{X}_{true} \|_F^2 \right] -\frac{1}{2} \| \tensor{X}_* - \tensor{X}_{true} \|_F^2\\
&\leq \frac{L}{4} \left[ (1-2 \alpha \mu)^T \Delta_{0}^2 + \frac{\alpha M}{2 \mu} \right],
\end{align*}
which describes the difference between the errors caused by \texttt{BrawsCPD} and the regular CP-ALS. When the step size $\alpha_{t}$ is fixed, the difference cannot be eliminated. This is because the term $\frac{L}{4} \frac{\alpha M}{2 \mu}$ in the right side of the above expression is independent of the iteration number $T$. In the following, we consider the changing step size.

\begin{theorem}
	\label{thm:de_step_bra}
	In the setting of \Cref{thm:conv_unbra}, take the decreasing step size as
	\begin{equation*}
	\alpha_t = \frac{\beta}{t+\gamma},
	\end{equation*}
	where  $\beta > \frac{1}{2 \mu}$ and $\gamma > 0$ such that $\alpha_1 \leq \frac{1}{2 \mu}$. Then, for $\forall t \geq 1$, we have
	\begin{equation}
	\label{eq:de_conv}
	\bb{E} \left[ \| \llbracket \mat{A}^{(1)}_{(t)},\mat{A}^{(2)}_{(t)}, \cdots,\mat{A}^{(N)}_{(t)} \rrbracket - \tensor{X}_{true} \|_F^2 \right]-\frac{1}{2} \| \tensor{X}_* - \tensor{X}_{true} \|_F^2 
	\leq \frac{L}{4} \frac{v}{\gamma+t},
	\end{equation}
	where $v = \max \left\{ \frac{\beta^2 M}{2 \beta \mu - 1}, \gamma \Delta_0^2 \right\}$.
\end{theorem}

\begin{proof}
	In the proof of \Cref{thm:conv_unbra}, we have shown that 
	\begin{equation*}
	\bb{E} \left[ \Delta_{t+1}^2 \right] \leq (1-2 \alpha_t \mu) \bb{E} \left[ \Delta_{t}^2 \right] + \alpha_t^2 M.
	\end{equation*}
	In the following, we prove $\bb{E} \left[ \Delta_{t}^2 \right] 
	\leq \frac{v}{\gamma+t} $
	by mathematical induction. When $t = 0$, the formula is established by the definition of $v$.
	Now, we assume that this formula holds for $t$. 
	For the sake of notation, we set $\hat{t} = \gamma + t$. Then $\alpha_t = \frac{\beta}{\hat{t}}$ and $\bb{E} \left[ \Delta_{t}^2 \right] 
	\leq \frac{v}{\hat{t}} $. Thus, 
	\begin{align*}
	\bb{E} \left[ \Delta_{t+1}^2 \right] &\leq (1-2 \alpha_t \mu) \frac{v}{\hat{t}} + \alpha_t^2 M 
	= (1 - \frac{2 \beta \mu}{\hat{t}}) \frac{v}{\hat{t}} + \frac{\beta^2 M}{\hat{t}^2} \\
	&= \frac{\hat{t}-1}{\hat{t}^2}v - \frac{2 \beta \mu - 1}{\hat{t}^2}v + \frac{\beta^2 M}{\hat{t}^2} 
	\leq \frac{v}{\hat{t}+1},
	\end{align*}
	where the last inequality follows from the definition of $v$. Therefore, the formula also holds for $t+1$, and hence the desired result in \eqref{eq:de_conv} is derived.
\end{proof}

\begin{remark}
	From \eqref{eq:de_conv}, we can see that the difference between the errors caused by \texttt{BrawsCPD} and the
	regular CP-ALS can approach zero as the iteration number $T$ grows. More specifically, 
	to ensure the difference be smaller than some target error $\epsilon$, it suffices to set $T \geq \frac{L v}{4 \epsilon} + \gamma$. Note that for the case on fixed step size, to get the above aim\footnote{In fact, this aim cannot be truly achieved since $8 \mu \epsilon - \alpha L M$ may be smaller than zero when $\epsilon$ is small enough. It again shows that, for the fixed step size case, the difference between the errors caused by \texttt{BrawsCPD} and the regular CP-ALS cannot be eliminated.}, the iteration number $T$ should satisfy  
	\begin{equation}
	\label{eq:iters}
	T \geq \frac{1}{2 \alpha \mu} \ln \left( \frac{2 \mu \Delta_{0}^2 L}{8 \mu \epsilon - \alpha L M} \right).    
	\end{equation} 
\end{remark}

\begin{remark}
	For \texttt{AdawsCPD}, its step size is also changing. Similar to the proofs of \Cref{thm:conv_unbra,thm:de_step_bra}, one may present the error analysis for this algorithm. However, the process is very tedious and we cannot find an elegant result at present. So we omit it here and leave it for future research.
\end{remark}

\subsection{Variance of stochastic gradient}
Upon closer examination of \Cref{thm:conv_unbra} and \eqref{eq:iters}, we can see that the error bound and iteration number have a close relationship with $M$. Further, note that $M \leq \max \left\{ \bb{E}_{\zeta_{(t)}}\left[ \| \mat{G}^{(\xi_{(t)})}_{(t)} - \nabla_{\mat{A}^{(\xi_{(t)})}} f(\bm{\theta}_{(t)}) \|_F^2 ~|~\mc{B}_{(t)},\xi_{(t)} \right] \right\} + L^2 \Delta_t^2$. 
So, in the following, we present the specific expression of the variance. 

\begin{theorem}
\label{thm:var}
	In the setting of \Cref{thm:unbias},
	suppose that $\vect{p} \in \bb{R}^{J_n}$ is any probability distribution proposed in \Cref{sec:efficient_sampling}, and $\mat{R}^{(n)}_{(t)} = \mat{A}^{(n)}_{(t)} (\mat{Z}^{(n)}_{(t)})^\intercal - \mat{X}_{(n)}$. Then 
	\begin{align}
	\label{eq:var_p}
	\nonumber &\bb{E}_{\zeta_{(t)}}\left[ \| \mat{G}^{(\xi_{(t)})}_{(t)} - \nabla_{\mat{A}^{(\xi_{(t)})}} f(\vect{\theta}_{(t)}) \|_F^2 ~|~\mc{B}_{(t)},\xi_{(t)} \right] \\
	&= \frac{1}{|\mc{F}_n|} \sum_{j_f=1}^{J_n} \frac{1}{p_{j_f}} \| \mat{R}^{(\xi_{(t)})}_{(t)}(:,j_f) \|_2^2 \| \mat{Z}^{(\xi_{(t)})}_{(t)}(j_f,:) \|_2^2 - \frac{1}{|\mc{F}_n|} \| \nabla_{\mat{A}^{(\xi_{(t)})}} f(\vect{\theta}_{(t)}) \|_F^2.
	\end{align}
	
\end{theorem}

\begin{proof}
	
	Define
	\begin{equation*}
	\Theta_f = \frac{1}{|\mc{F}_n| J_n p_{j_f}} \nabla_{\mat{A}^{(\xi_{(t)})}}  f_{j_f}(\vect{\theta}),
	\end{equation*}
	where $f = 1,\cdots, |\mc{F}_n|$.
	Thus,
	\begin{equation*}
	\bb{E}[\Theta_f] = \sum_{j_f=1}^{J_n} \frac{1}{|\mc{F}_n| J_n p_{j_f}} p_{j_f} \nabla_{\mat{A}^{(\xi_{(t)})}}  f_{j_f}(\vect{\theta}) = \frac{1}{|\mc{F}_n|} \nabla_{\mat{A}^{(\xi_{(t)})}}  f(\vect{\theta}),
	\end{equation*}
	and
	\begin{align*}
	\bb{V}[(\Theta_f)_{i,r}] 
	&= \bb{E}[(\Theta_f)_{i,r}^2] - \bb{E}^2[(\Theta_f)_{i,r}] \\
	&= \frac{1}{|\mc{F}_n|^2 J_n^2} \sum_{j_f=1}^{J_n} \left( \frac{\left[ \nabla_{\mat{A}^{(\xi_{(t)})}} f_{j_f}(\vect{\theta}) \right]_{i,r}^2}{p_{j_f}} \right) - \frac{1}{|\mc{F}_n|^2} \left[ \nabla_{\mat{A}^{(\xi_{(t)})}} f(\vect{\theta}) \right]_{i,r}^2.
	\end{align*}
	Since $\bb{V} \left[ \left( \mat{G}^{(\xi_{(t)})}_{(t)} \right)_{i,r} \right] = \sum_{f=1}^{F} \bb{V}[(\Theta_f)_{i,r}]$, we have
	\begin{equation*}
	\bb{V} \left[ \left( \mat{G}^{(\xi_{(t)})}_{(t)} \right)_{i,r} \right] = \frac{1}{|\mc{F}_n| J_n^2} \sum_{j_f=1}^{J_n} \left( \frac{\left[ \nabla_{\mat{A}^{(\xi_{(t)})}} f_{j_f}(\vect{\theta}) \right]_{i,r}^2}{p_{j_f}} \right) - \frac{1}{|\mc{F}_n|} \left[ \nabla_{\mat{A}^{(\xi_{(t)})}} f(\vect{\theta}) \right]_{i,r}^2.
	\end{equation*}
	
	On the other hand, 
	\begin{align*}
	&\bb{E}_{\zeta_{(t)}}\left[ \| \mat{G}^{(\xi_{(t)})}_{(t)} - \nabla_{\mat{A}^{(\xi_{(t)})}} f(\vect{\theta}_{(t)}) \|_F^2 ~|~ \mc{B}_{(t)},\xi_{(t)} \right] \\
	&\qquad = \sum_{i=1}^{I_n} \sum_{r=1}^{R} \bb{E} \left[ \left( \mat{G}^{(\xi_{(t)})}_{(t)} - \nabla_{\mat{A}^{(\xi_{(t)})}} f(\vect{\theta}) \right)_{i,r}^2 \right]  = \sum_{i=1}^{I_n} \sum_{r=1}^{R} \bb{V} \left[ \left( \mat{G}^{(\xi_{(t)})}_{(t)} \right)_{i,r} \right].
	\end{align*}
	Therefore,
	\begin{align*}
	&\bb{E}_{\zeta_{(t)}}\left[ \| \mat{G}^{(\xi_{(t)})}_{(t)} - \nabla_{\mat{A}^{(\xi_{(t)})}} f(\vect{\theta}_{(t)}) \|_F^2 ~|~ \mc{B}_{(t)},\xi_{(t)} \right] \\
	&\qquad = \frac{1}{|\mc{F}_n| J_n^2} \sum_{j_f=1}^{J_n} \frac{1}{p_{j_f}} \sum_{i=1}^{I_n} \sum_{r=1}^{R} \left[ \nabla_{\mat{A}^{(\xi_{(t)})}} f_{j_f}(\vect{\theta}) \right]_{i,r}^2 - \frac{1}{|\mc{F}_n|} \| \nabla_{\mat{A}^{(\xi_{(t)})}} f(\vect{\theta}) \|_F^2 \\
	&\qquad = \frac{1}{|\mc{F}_n|} \sum_{j_f=1}^{J_n} \frac{1}{p_{j_f}} \left( \sum_{i=1}^{I_n} (\mat{R}^{(\xi_{(t)})}_{(t)})_{i,j_f}^2 \right) \left( \sum_{r=1}^{R} (\mat{Z}^{(\xi_{(t)})}_{(t)})_{j_f,r}^2 \right) - \frac{1}{|\mc{F}_n|} \| \nabla_{\mat{A}^{(\xi_{(t)})}} f(\vect{\theta}) \|_F^2 \\
	&\qquad = \frac{1}{|\mc{F}_n|} \sum_{j_f=1}^{J_n} \frac{1}{p_{j_f}} \| \mat{R}^{(\xi_{(t)})}_{(t)}(:,j_f) \|_2^2 \| \mat{Z}^{(\xi_{(t)})}_{(t)}(j_f,:) \|_2^2 - \frac{1}{|\mc{F}_n|} \| \nabla_{\mat{A}^{(\xi_{(t)})}} f(\vect{\theta}) \|_F^2,
	\end{align*} 	
	where the second equality follows from $\nabla_{\mat{A}^{(\xi_{(t)})}} f_{j_f}(\vect{\theta}) = J_n \mat{R}^{(\xi_{(t)})}_{(t)}(:,j_f) \mat{Z}^{(\xi_{(t)})}_{(t)}(j_f,:)$. 
\end{proof}

\begin{remark}
	We list the variances for three specific probability distributions in \Cref{tab:var}.  Since some leverage scores or squared Euclidean norms of the rows of $\mat{A}^{(k)}$ may be very small, the corresponding variances may be larger than the one for uniform sampling. This result is similar to the finding in \cite{ma2015StatisticalPerspective}. 
	Hence, it is difficult to compare these variances in theory. Numerical results in  
	\Cref{sec:Numerical_Results} 
	show that, in most cases, the variances for importance sampling are much smaller than the one for uniform sampling.
	\begin{table}[htbp] 
		\caption{Variances for different probability distributions} 
		\label{tab:var}
		\resizebox{1\linewidth}{!}{
			\begin{tabular}{llll} 
				\toprule
				Probability distributions & $\bb{E}_{\zeta_{(t)}}\left[ \| \mat{G}^{(\xi_{(t)})}_{(t)} - \nabla_{\mat{A}^{(\xi_{(t)})}} f(\mat{\theta}_{(t)}) \|_F^2 ~|~\mc{B}_{(t)},\xi_{(t)} \right]$ \\
				\midrule
				Uniform & $\frac{J_n}{|\mc{F}_n|} \sum_{j_f=1}^{J_n} \| \mat{R}^{(\xi_{(t)})}_{(t)}(:,j_f) \|_2^2 \| \mat{Z}^{(\xi_{(t)})}_{(t)}(j_f,:) \|_2^2 - \frac{1}{|\mc{F}_n|} \| \nabla_{\mat{A}^{(\xi_{(t)})}} f(\vect{\theta}_{(t)}) \|_F^2$ \\
				Leverage-based & $\frac{R^{N-1}}{|\mc{F}_n|} \sum_{j_f=1}^{J_n} \frac{ \| \mat{R}^{(\xi_{(t)})}_{(t)}(:,j_f) \|_2^2 \| \mat{Z}^{(\xi_{(t)})}_{(t)}(j_f,:) \|_2^2}{\prod_{k \ne \xi_{(t)}} \ell_{i_{k}^{(j_f)}}(\mat{A}^{(k)})} - \frac{1}{|\mc{F}_n|} \| \nabla_{\mat{A}^{(\xi_{(t)})}} f(\vect{\theta}_{(t)}) \|_F^2$ \\
				Euclidean-based & $\frac{\prod_{k \ne \xi_{(t)}} \| \mat{A}^{(k)} \|_F^2}{|\mc{F}_n|} \sum_{j_f=1}^{J_n} \frac{\| \mat{R}^{(\xi_{(t)})}_{(t)}(:,j_f) \|_2^2 \| \mat{Z}^{(\xi_{(t)})}_{(t)}(j_f,:) \|_2^2}{\prod_{k \ne \xi_{(t)}} \| \mat{A}^{(k)} (i_{k}^{(j_f)}, :) \|^2_2} - \frac{1}{|\mc{F}_n|} \| \nabla_{\mat{A}^{(\xi_{(t)})}} f(\vect{\theta}_{(t)}) \|_F^2$ \\
				\bottomrule
			\end{tabular}
		}
	\end{table} 
	
	
	
\end{remark}

On the basis of \Cref{thm:var}, in the following, we theoretically give the optimal sampling probability distribution in the sense of minimizing variance.

\begin{theorem}[Optimal Sampling Probability]
	\label{thm:optimal_prob}
	In the setting of \Cref{thm:unbias},
	suppose that $\vect{p} \in \bb{R}^{J_n}$ 
	is any probability distribution 
	and $\mat{R}^{(n)}_{(t)} = \mat{A}^{(n)}_{(t)} (\mat{Z}^{(n)}_{(t)})^\intercal - \mat{X}_{(n)}$. Then if $\vect{p}$ is as 
	\begin{equation}
	\label{eq:optimal_p}
	p_i = \frac{\| \mat{R}^{(\xi_{(t)})}_{(t)}(:,i) \|_2 \| \mat{Z}^{(\xi_{(t)})}_{(t)}(i,:) \|_2}{\sum_{i'=1}^{J_n} \| \mat{R}^{(\xi_{(t)})}_{(t)}(:,i') \|_2 \| \mat{Z}^{(\xi_{(t)})}_{(t)}(i',:) \|_2}, \ i=1,\cdots, J_n,
	\end{equation}
	$\bb{E}_{\zeta_{(t)}}\left[ \| \mat{G}^{(\xi_{(t)})}_{(t)} - \nabla_{\mat{A}^{(\xi_{(t)})}} f(\vect{\theta}_{(t)}) \|_F^2 ~|~\mc{B}_{(t)},\xi_{(t)} \right] $ achieves its minimum as 
	\begin{align}
	\label{eq:opt_prob}
	\frac{1}{|\mc{F}_n|} \left( \sum_{j_f=1}^{J_n} \| \mat{R}^{(\xi_{(t)})}_{(t)}(:,j_f) \|_2 \| \mat{Z}^{(\xi_{(t)})}_{(t)}(j_f,:) \|_2 \right)^2 - \frac{1}{|\mc{F}_n|} \| \nabla_{\mat{A}^{(\xi_{(t)})}} f(\vect{\theta}_{(t)}) \|_F^2.
	\end{align}
\end{theorem}

\begin{proof}
	
	Define a function as
	\begin{equation*}
	f(p_1, \cdots p_{J_n}) = \sum_{j_f=1}^{J_n} \frac{1}{p_{j_f}} \| \mat{R}^{(\xi_{(t)})}_{(t)}(:,j_f) \|_2^2 \| \mat{Z}^{(\xi_{(t)})}_{(t)}(j_f,:) \|_2^2,
	\end{equation*}
	which characterizes the dependence of the variance 
	\begin{equation*}
	\bb{E}_{\zeta_{(t)}}\left[ \| \mat{G}^{(\xi_{(t)})}_{(t)} - \nabla_{\mat{A}^{(\xi_{(t)})}} f(\vect{\theta}_{(t)}) \|_F^2 ~|~\mc{B}_{(t)},\xi_{(t)} \right]
	\end{equation*}
	on the sampling probability distribution $\vect{p}$. To minimize $f$ subject to $\sum_{i=1}^{J_n} p_i = 1$, we introduce the Lagrange multiplier $\lambda$ and define the function
	\begin{equation*}
	g(p_1, \cdots p_{J_n}) = f(p_1, \cdots p_{J_n}) + \lambda \left( \sum_{i=1}^{J_n} p_i - 1 \right).
	\end{equation*}
	Since
	\begin{equation*}
	0 = \frac{\partial g}{\partial p_i} = \frac{-1}{p_i^2} \| \mat{R}^{(\xi_{(t)})}_{(t)}(:,i) \|_2^2 \| \mat{Z}^{(\xi_{(t)})}_{(t)}(i,:) \|_2^2 + \lambda,
	\end{equation*}
	we have
	\begin{equation*}
	p_i = \frac{\| \mat{R}^{(\xi_{(t)})}_{(t)}(:,i) \|_2^2 \| \mat{Z}^{(\xi_{(t)})}_{(t)}(i,:) \|_2^2}{\sqrt{\lambda}} 
	= \frac{\| \mat{R}^{(\xi_{(t)})}_{(t)}(:,i) \|_2^2 \| \mat{Z}^{(\xi_{(t)})}_{(t)}(i,:) \|_2^2}{\sum_{i'=1}^{J_n} \| \mat{R}^{(\xi_{(t)})}_{(t)}(:,i') \|_2^2 \| \mat{Z}^{(\xi_{(t)})}_{(t)}(i',:) \|_2^2},
	\end{equation*}
	where the second equality is from the fact that $\sum_{i=1}^{J_n} p_i = 1$. Further, note that, for the above probabilities, $\frac{\partial^2 g}{\partial p_i^2} > 0$. 
	Hence, the probability distribution $\vect{p}$ in \eqref{eq:optimal_p} minimizes the variance.
	
	On the other hand, substituting $\vect{p}$ in \eqref{eq:optimal_p} into \eqref{eq:var_p} gives \eqref{eq:opt_prob}.
	So, the desired results hold.
\end{proof}

\begin{remark}
	Although the probability distribution in \eqref{eq:optimal_p} is optimal in reducing variance, it is unpractical compared to the ones proposed in \Cref{sec:efficient_sampling}. 
	This is because this probability distribution needs to form the matrix $\mat{R}^{(\xi_{(t)})}_{(t)}$ and compute the norms of its columns in each iteration. So, we don't consider it in numerical experiments.
\end{remark}

\section{Numerical Results}
\label{sec:Numerical_Results}
In this section, we use synthetic and real data to test the effectiveness of our method. 
For maneuverability, we only perform \texttt{AdawsCPD} with two sampling strategies to avoid tuning the step size manually. In addition, we mainly compare our method with \texttt{AdasCPD} from \cite{fu2020BlockRandomizedStochastic} because it mainly build on this method and there are already extensive comparisons between \texttt{AdasCPD} and others methods in \cite{fu2020BlockRandomizedStochastic}. 

\subsection{Environment setup}
\label{sec:experimental setup}
The experiments were carried out  by using the Tensor Toolbox for MATLAB (Version 2018a) \cite{kolda2006TensorToolbox}, and
we used a 2.3 GHz 8-Core Intel Core i9 CPU with 16 GB 2400 MHz DDR4 memory.

\subsection{Synthetic data}
\label{sec:syn_data}
The synthetic tensors of size $I \times I \times I$ are used in our experiments, and, to show the advantages of importance sampling, we use the method from \cite{larsen2020PracticalLeverageBased} to generate the data. 

Specifically,  three $I$-by-$R_{true}$ factor matrices with independent standard Gaussian entries are first generated. Then, the first three columns of each factor matrix are set to be 0. 
Following this,  a data-generating function with two parameters $spread$ and $magnitude$ is applied to those zero columns to make them nonzero, 
where the parameters are used to control the number and size of non-zero elements, 
respectively. 
Finally, for the last factor matrix, we only keep its top 15 rows and set the remaining rows to be 0. Thus, three factor matrices with high leverage scores are created and hence a desired true tensor is generated:
\begin{equation*}
\tensor{X}_{true} = \llbracket \mat{A}^{(1)},\mat{A}^{(2)}, \mat{A}^{(3)} \rrbracket.
\end{equation*}
The observed tensor is obtained by adding suitable noise into the true tensor. That is, 
\begin{equation*}
\tensor{X} = \tensor{X}_{true} + noise \left(\frac{\| \tensor{X}_{true} \|}{\| \tensor{N} \|}\right) \tensor{N},
\end{equation*}
where $\tensor{N} \in \bb{R}^{I \times I \times I}$ is a noise tensor with entries drawn from a standard normal distribution and the parameter $noise$ is the amount of noise. As for the sparse tensor, we will only add the noise to the non-zero entries.

Note that the way of generating data is not unique. Actually, as long as the data has the characteristic of different importance among rows of coefficient matrices of CP-ALS subproblems, our algorithms will have better performance in terms of accuracy or the number of iterations compared with \texttt{AdasCPD}. 

To measure the performance, we compute the relative error $Tol$ after each iteration, 
\begin{equation*}
Tol = \frac{\| \llbracket \hat{\mat{A}}^{(1)},\hat{\mat{A}}^{(2)}, \hat{\mat{A}}^{(3)} \rrbracket - \tensor{X} \|_F^2}{\| \tensor{X} \|_F^2},
\end{equation*}
where $\hat{\mathbf{A}}^{(1)}, \hat{\mat{A}}^{(2)}$, and $ \hat{\mat{A}}^{(3)}$ are the estimated factors, and then record the number of iterations and  running time for the same accuracy. All the results are obtained from 10 trials with tensors generated randomly.

\begin{table}[htbp]
	\caption{Performance of the algorithms with $Tol = 10^{-5}$, $|\mc{F}_n| = 18$, the target rank $R = 10$, $noise = 0$,  and random initialization for different tensors generated by $I \times 10$ factor matrices with different $I$.}
	\label{tab:Ichange}
	\resizebox{1\linewidth}{!}{
		\begin{tabular}{lllllll}
			\hline
			\multicolumn{2}{c}{\multirow{2}{*}{Algorithms}}                                       & $I=100$                                                                & $I=200$                                                                 & $I=300$                                                                 & $I=400$                                                                 & $I=500$                                                                 \\ \cline{3-7} 
			\multicolumn{2}{c}{}                                                                  & \begin{tabular}[c]{@{}l@{}}$spread=15$, \\ $magnitude=24$\end{tabular} & \begin{tabular}[c]{@{}l@{}}$spread=30$, \\  $magnitude=30$\end{tabular} & \begin{tabular}[c]{@{}l@{}}$spread=45$, \\  $magnitude=36$\end{tabular} & \begin{tabular}[c]{@{}l@{}}$spread=60$, \\  $magnitude=42$\end{tabular} & \begin{tabular}[c]{@{}l@{}}$spread=75$, \\  $magnitude=48$\end{tabular} \\ \hline
			\multirow{2}{*}{\texttt{AdasCPD} \cite{fu2020BlockRandomizedStochastic}} & Iterations & 5962.7                                                                 & 4206.3                                                                  & 3105.8                                                                  & 3271.9                                                                  & 4229.5                                                                  \\
			& Seconds    & 23.997029                                                              & 131.46287                                                               & 472.43042                                                               & 1241.3702                                                               & 3224.5938                                                               \\ \hline
			\multirow{2}{*}{\texttt{EAdawsCPD}}                                      & Iterations & 2242.1                                                                 & 572.2                                                                   & 390.3                                                                   & 391.1                                                                   & 488.6                                                                   \\
			& Seconds    & 9.1444463                                                              & 18.08937                                                                & 58.656213                                                               & 150.04822                                                               & 372.96764                                                               \\ \hline
			\multirow{2}{*}{\texttt{LAdawsCPD}}                                      & Iterations & 2394.9                                                                 & 577.8                                                                   & 429.9                                                                   & 483.2                                                                   & 607.3                                                                   \\
			& Seconds    & 10.634388                                                              & 18.25461                                                                & 64.193687                                                               & 184.35716                                                               & 463.1262                                                                \\ \hline
		\end{tabular}
	}
\end{table}

\begin{figure}[htbp]
	\includegraphics[width=0.5\textwidth]{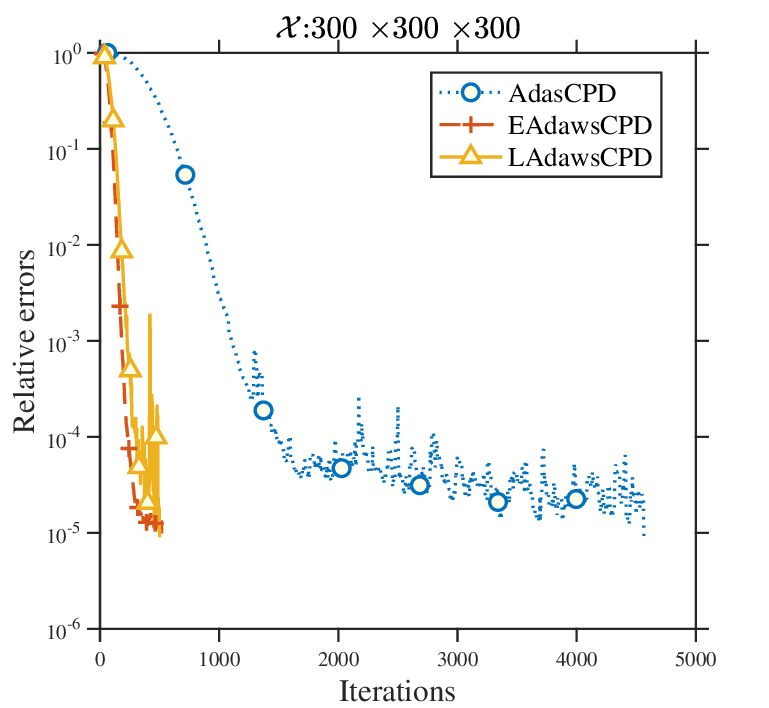}
	\hspace{0.1in}
	\includegraphics[width=0.5\textwidth]{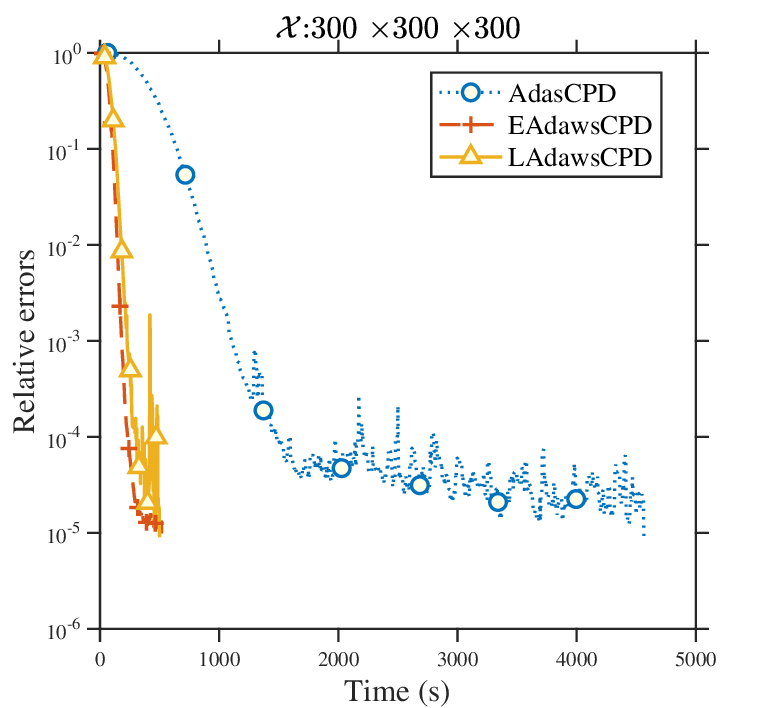}
	\caption{Number of iterations v.s. Relative errors and Time v.s. Relative errors output by the algorithms with $Tol = 10^{-5}$, $|\mc{F}_n|=18$, $R=10$, $noise = 0$, and random initialization for the tensor with $I=300$, $R_{true}=10$, $spread = 45$, and $magnitude = 36$.}
	\label{fig:Ichange300}
\end{figure}

In \Cref{tab:Ichange}, we list the performance of algorithms for tensors with different sizes. From this table, we can see that our algorithms have better performance than \texttt{AdasCPD} in \cite{fu2020BlockRandomizedStochastic} in terms of  the number of iterations and overall running time. Some specific discussions of the performance are in order. 
\begin{enumerate}
	\item With the increase of tensor size, \texttt{EAdawsCPD} and \texttt{LAdawsCPD} always have better performance in terms of the iterations and running time compared with \texttt{AdasCPD}. 
	\item For the probability distribution, the leverage-based algorithm needs a few more iterations than the Euclidean-based algorithm, and the gap is getting larger as the tensor size increases. The reason may be that the latter has a theoretical guarantee that such probability distribution is reasonable \cite{needell2017BatchedStochastic,needell2016StochasticGradient}, while the former has no similar theoretical result and is more of an empirical choice.
	\item Theoretically, the running time of a single step of our algorithms will be a little more than that of \texttt{AdasCPD}. This is because our algorithms use the importance sampling probability distributions which are a little expensive to compute. However, the importance sampling can improve convergence speed and hence can reduce iterations. So, the overall running time of our algorithms is still much less than that of \texttt{AdasCPD}. 
\end{enumerate}

Besides, to make the above comparison more intuitive, we also plot the numerical results for $I=300$ from \Cref{tab:Ichange} in \Cref{fig:Ichange300}, from which we can see that \texttt{AdasCPD} indeed converges slowly and is unstable. Whereas, our algorithms have quite good performance.

\begin{table}[htbp]
	\caption{Performance of the algorithms with $Tol = 10^{-5}$, $|\mc{F}_n| = 18$, different target ranks $R$, $noise = 0$, and random initialization for the tensor generated by $300 \times 10$ factor matrices with $spread = 45$ and $magnitude = 36$.}
	\label{tab:Rchange}
	\begin{tabular}{llllll}
		\hline
		\multicolumn{2}{c}{Algorithms}                                                        & $R=5$     & $R=10$    & $R=15$    & $R=20$    \\ \hline
		\multirow{2}{*}{\texttt{AdasCPD }\cite{fu2020BlockRandomizedStochastic}} & Iterations & 3059.1    & 3105.8    & 4823.7    & 7094.8    \\
		& Seconds    & 480.47423 & 472.43042 & 767.12695 & 1125.8787 \\ \hline
		\multirow{2}{*}{\texttt{EAdawsCPD}}                                      & Iterations & 474       & 390.3     & 406.7     & 824.9     \\
		& Seconds    & 73.963123 & 58.656213 & 65.151505 & 132.01017 \\ \hline
		\multirow{2}{*}{\texttt{LAdawsCPD}}                                      & Iterations & 535.7     & 429.9     & 567.1     & 827.1     \\
		& Seconds    & 83.791824 & 64.193687 & 90.722473 & 132.73588 \\ \hline
	\end{tabular}
\end{table}

In \Cref{tab:Rchange}, we list the performance of algorithms with different target ranks for the same tensor. 
Numerical results show that our algorithms always 
perform much better 
than \texttt{AdasCPD} in iterations and running time. Furthermore, for all the algorithms, the closer the target rank is to the true rank, the better the results are.

To validate the robustness of our algorithms to 
noises, we run additional experiments on the same synthetic tensor with Gaussian noises with different standard deviations. The results, shown in \Cref{tab:Nchange}, indicate that the earlier experiment results are indeed robust to noises. 

\begin{table}[htbp]
	\caption{Performance of the algorithms with $Tol = 10^{-5}$, $|\mc{F}_n| = 18$, the target rank $R = 10$, and random initialization for different tensors generated by $300 \times 10$ factor matrices with $spread = 45$, $magnitude = 36$, and  different noises.}
	\label{tab:Nchange}
	\begin{tabular}{llllll}
		\hline
		\multicolumn{2}{c}{Algorithms}                                                        & $noise=0$  & $noise=0.01$ & $noise=0.1$ & $noise=1$  \\ \hline
		\multirow{2}{*}{\texttt{AdasCPD }\cite{fu2020BlockRandomizedStochastic}} & Iterations & 3105.8     & 3211.7       & 3024.9      & 3511.2     \\
		& Seconds    & 472.430421 & 505.928736   & 481.988384  & 553.33599  \\ \hline
		\multirow{2}{*}{\texttt{EAdawsCPD}}                                      & Iterations & 390.3      & 360.5        & 381.6       & 414.6      \\
		& Seconds    & 58.6562125 & 56.748149    & 60.5743992  & 65.3003044 \\ \hline
		\multirow{2}{*}{\texttt{LAdawsCPD}}                                      & Iterations & 429.9      & 410          & 439         & 452.1      \\
		& Seconds    & 64.1936869 & 64.6965127   & 70.8384762  & 71.6683267 \\ \hline
	\end{tabular}
\end{table}

In addition, as 
in \cite{fu2020BlockRandomizedStochastic}, our algorithms can also be generalized to nonnegative or other constrained situations. 
So, we also present the comparison of the algorithms with nonnegative constraints in \Cref{tab:nonnegative} and \Cref{fig:synthetic2}, from which we can see that the earlier experiment results are also robust to constraints.

\begin{table}[htbp]
	\caption{Performance of the algorithms with $Tol = 10^{-5}$, $|\mc{F}_n| = 18$, the target rank $R = 10$, $noise = 0$,  and random initialization for different tensors generated by $I \times 10$ factor matrices with different $I$ under nonnegative constraint.}
	\label{tab:nonnegative}
	\resizebox{1\linewidth}{!}{
		\begin{tabular}{lllllll}
			\hline
			\multicolumn{2}{c}{\multirow{2}{*}{Algorithms}}                                       & $I=100$                                                                & $I=200$                                                                 & $I=300$                                                                 & $I=400$                                                                 & $I=500$                                                                 \\ \cline{3-7} 
			\multicolumn{2}{c}{}                                                                  & \begin{tabular}[c]{@{}l@{}}$spread=15$, \\ $magnitude=24$\end{tabular} & \begin{tabular}[c]{@{}l@{}}$spread=30$, \\  $magnitude=30$\end{tabular} & \begin{tabular}[c]{@{}l@{}}$spread=45$, \\  $magnitude=36$\end{tabular} & \begin{tabular}[c]{@{}l@{}}$spread=60$, \\  $magnitude=42$\end{tabular} & \begin{tabular}[c]{@{}l@{}}$spread=75$, \\  $magnitude=48$\end{tabular} \\ \hline
			\multirow{2}{*}{\texttt{AdasCPD }\cite{fu2020BlockRandomizedStochastic}} & Iterations & 500.1                                                                  & 910.7                                                                   & 1424.5                                                                  & 2203.7                                                                  & 3372.8                                                                  \\
			& Seconds    & 2.0256353                                                              & 28.981344                                                               & 225.79166                                                               & 737.83326                                                               & 2550.5717                                                               \\ \hline
			\multirow{2}{*}{\texttt{EAdawsCPD}}                                      & Iterations & 156.6                                                                  & 190.6                                                                   & 250.6                                                                   & 319                                                                     & 394.2                                                                   \\
			& Seconds    & 0.6437757                                                              & 6.0335162                                                               & 40.041681                                                               & 110.00088                                                               & 299.55425                                                               \\ \hline
			\multirow{2}{*}{\texttt{LAdawsCPD}}                                      & Iterations & 153.7                                                                  & 205.9                                                                   & 271.5                                                                   & 337.5                                                                   & 437.6                                                                   \\
			& Seconds    & 0.6892614                                                              & 6.6271954                                                               & 43.308729                                                               & 111.84547                                                               & 331.55795                                                               \\ \hline
		\end{tabular}
	}
\end{table}

\begin{figure}[htbp]
	\includegraphics[width=0.5\textwidth]{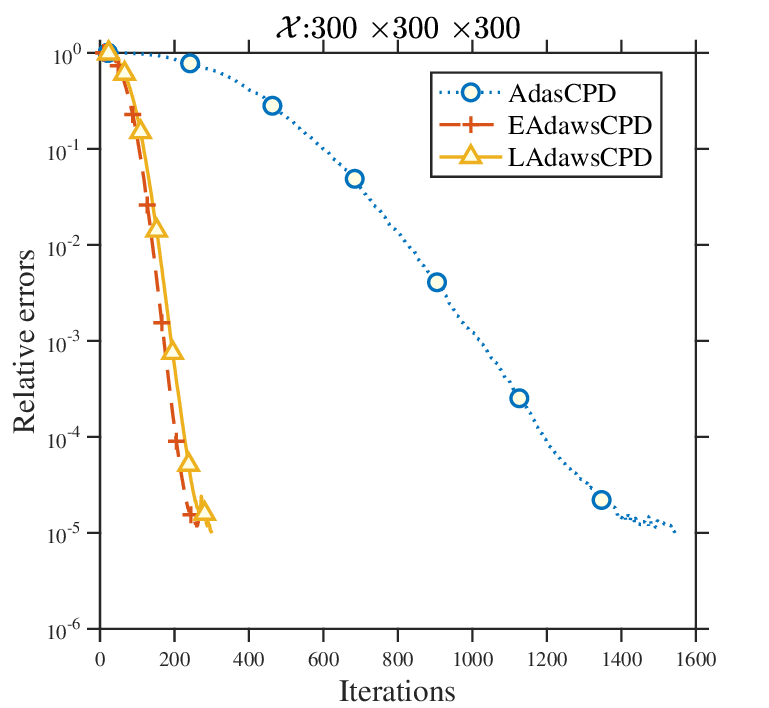}
	\hspace{0.1in}
	\includegraphics[width=0.5\textwidth]{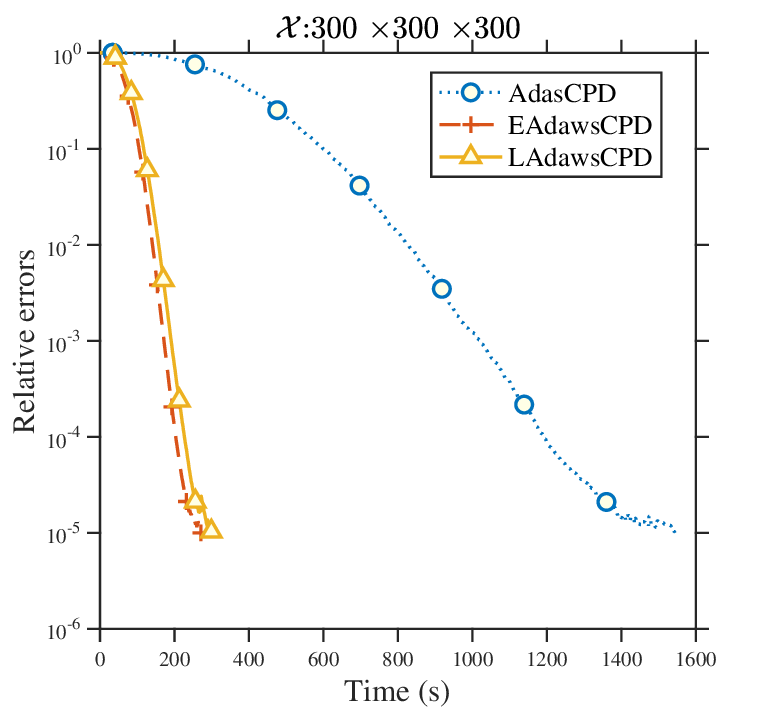}
	\caption{Number of iterations v.s. Relative errors and Time v.s. Relative errors output by the algorithms with 
	$Tol = 10^{-5}$, $|\mc{F}_n|=18$, $R=10$, $noise = 0$, and random initialization for the tensor with $I=300$, $R_{true}=10$, $spread = 45$, $magnitude = 36$, and $\mat{A}^{(n)}\geq\vect{0}$.}
	\label{fig:synthetic2}
\end{figure}

Finally, using the tensor for \Cref{fig:Ichange300} (\textbf{Data I} in short), we compare the variances of stochastic gradients listed in \Cref{tab:var}. Meanwhile, we also consider another two tensors:
\textbf{Data II}, which is generated by three factor matrices of size $300 \times 10$ with independent standard Gaussian entries, and
\textbf{Data III}, which is the same as \textbf{Data II} except that 
one entry of each factor matrices is chosen uniformly and set to 20.

All the numerical results are reported in \Cref{fig:var}, from which we can see that all the variances decrease with the increase 
of iterations, and 
the variances 
related to importance sampling are not worse than the corresponding ones 
based on uniform sampling. Moreover, for 
\textbf{Data I} and \textbf{Data III}, the former  
is much smaller than the latter, and decrease much faster as the number of iterations increases. This is mainly because the factor matrices for forming the tensors have high coherence, i.e., their maximal leverage scores are large. 

\begin{figure}[htbp] 
	\centering 
	\subfloat[Data I]{\includegraphics[scale=0.3]{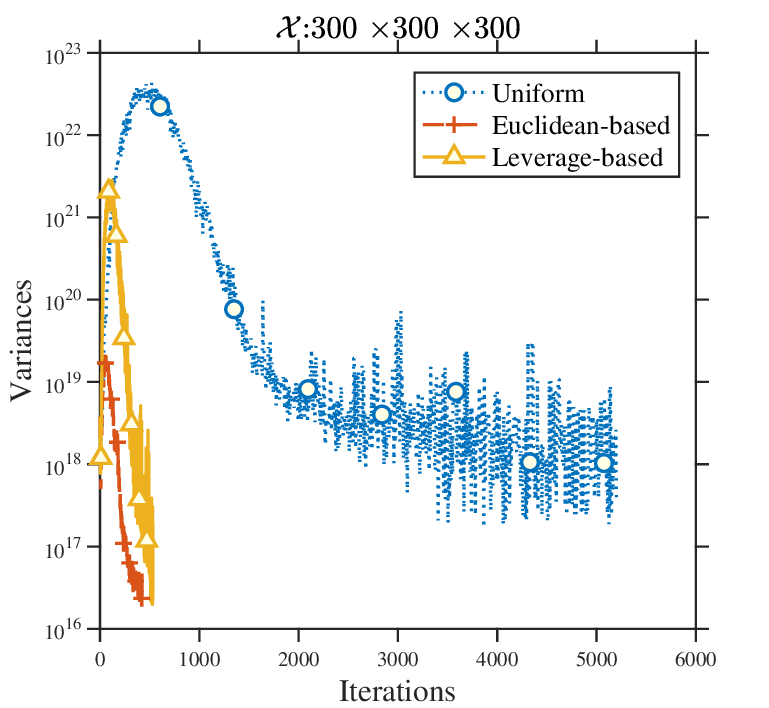}} 
	\subfloat[Data II]{\includegraphics[scale=0.3]{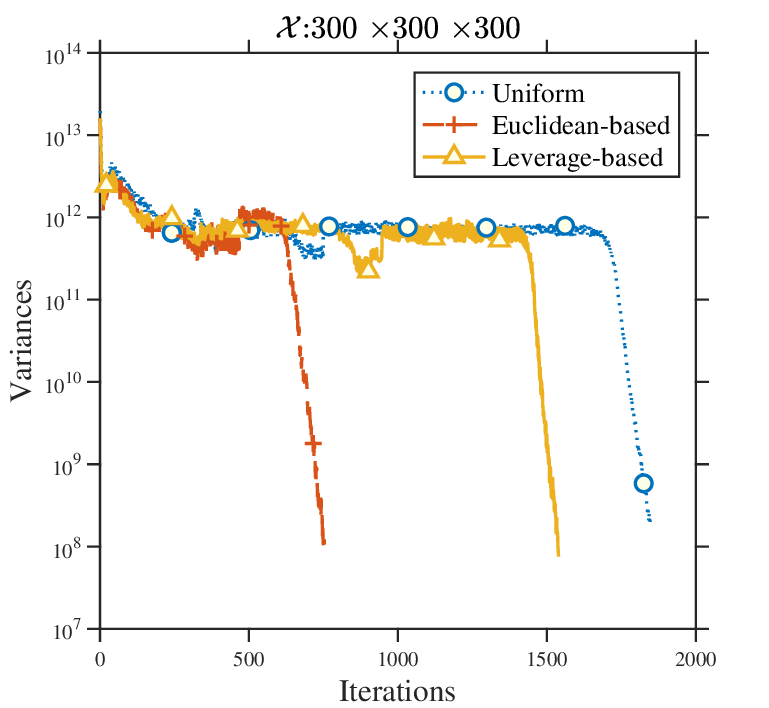}} 
	\subfloat[Data III]{\includegraphics[scale=0.3]{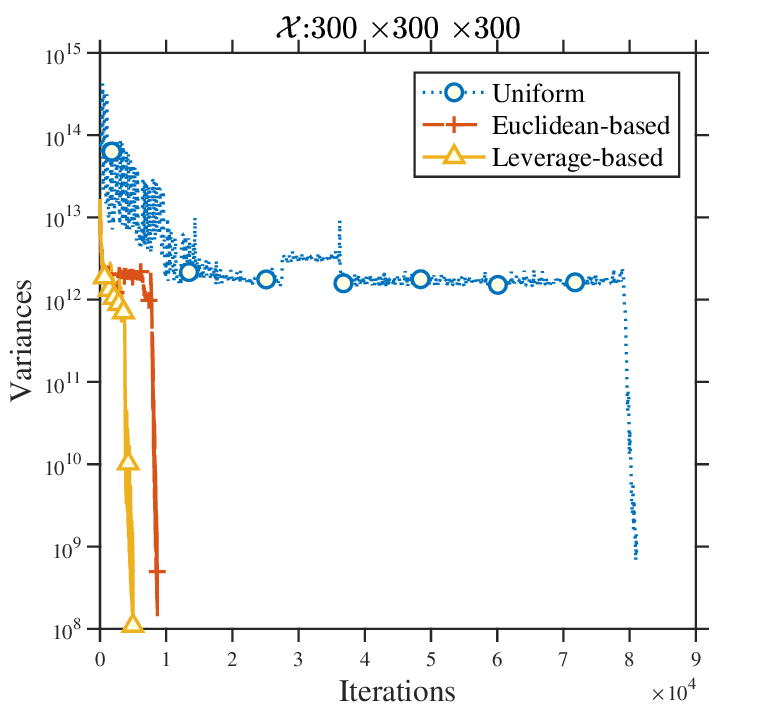}} 
	\caption{Comparison of variances of  stochastic gradients for 
	different tensors.}
	\label{fig:var}
\end{figure}

\subsection{Real data}
\label{sec:realdata}
In this subsection, we test our algorithms on hyperspectral images (HSIs), which are special images with two spatial coordinates and one spectral coordinate. We consider three data tensors  available at \url{http://www.ehu.eus/ccwintco/index.php/Hyperspectral Remote Sensing Scenes}. Their brief information is listed in \Cref{tab:realdata}.

\begin{table}[h] 
	\caption{Size and type of real datasets.} 
	\label{tab:realdata}
	\begin{tabular}{llll} 
		\toprule
		Dataset & Size & Type\\
		\midrule
		SalinasA. & $83 \times 86 \times 224$ & Hyperspectral \\
		Indian Pines & $145 \times 145 \times 220$ & Hyperspectral \\
		Pavia Uni. & $610 \times 340 \times 103$ & Hyperspectral \\
		\bottomrule
	\end{tabular}
\end{table}

\begin{table}[htbp]
	\caption{Performance of algorithms on real datasets.}
	\label{tab:real}
	\begin{tabular}{lllll}
		\hline
		\multicolumn{2}{c}{\multirow{2}{*}{Algorithms}}                                           & SalinasA.                                                                   & Indian Pines                                                                & Pavia Uni.                                                                   \\ \cline{3-5} 
		\multicolumn{2}{c}{}                                                                      & \begin{tabular}[c]{@{}l@{}}$R=10$,\\ $|\mathcal{F}_n|=20$\end{tabular} & \begin{tabular}[c]{@{}l@{}}$R=10$,\\ $|\mathcal{F}_n|=20$\end{tabular} & \begin{tabular}[c]{@{}l@{}}$R=100$, \\ $|\mathcal{F}_n|=20$\end{tabular} \\ \hline
		\multirow{2}{*}{\texttt{AdasCPD} \cite{fu2020BlockRandomizedStochastic}} & $Tol$   & 0.00697493                                                                  & 0.00782241                                                                  & 0.02831117                                                                   \\
		& Seconds & 288.535306                                                                  & 653.276364                                                                  & 4387.12147                                                                   \\ \hline
		\multirow{2}{*}{\texttt{EAdawsCPD}}                                              & $Tol$   & 0.00611473                                                                  & 0.00725646                                                                  & 0.02792415                                                                   \\
		& Seconds & 291.374795                                                                  & 659.02316                                                                   & 4450.21809                                                                   \\ \hline
		\multirow{2}{*}{\texttt{LAdawsCPD}}                                              & $Tol$   & 0.00644397                                                                  & 0.00741898                                                                  & 0.02796972                                                                   \\
		& Seconds & 295.962095                                                                  & 663.648648                                                                  & 4559.80737                                                                   \\ \hline
	\end{tabular}
\end{table}

\begin{figure}[htbp]
	\includegraphics[width=0.5\textwidth]{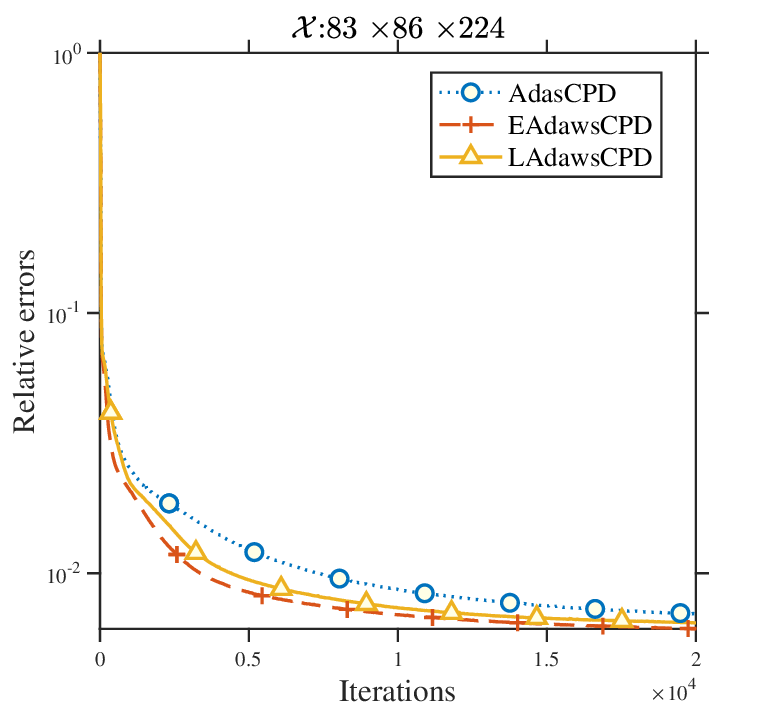}
	\hspace{0.1in}
	\includegraphics[width=0.5\textwidth]{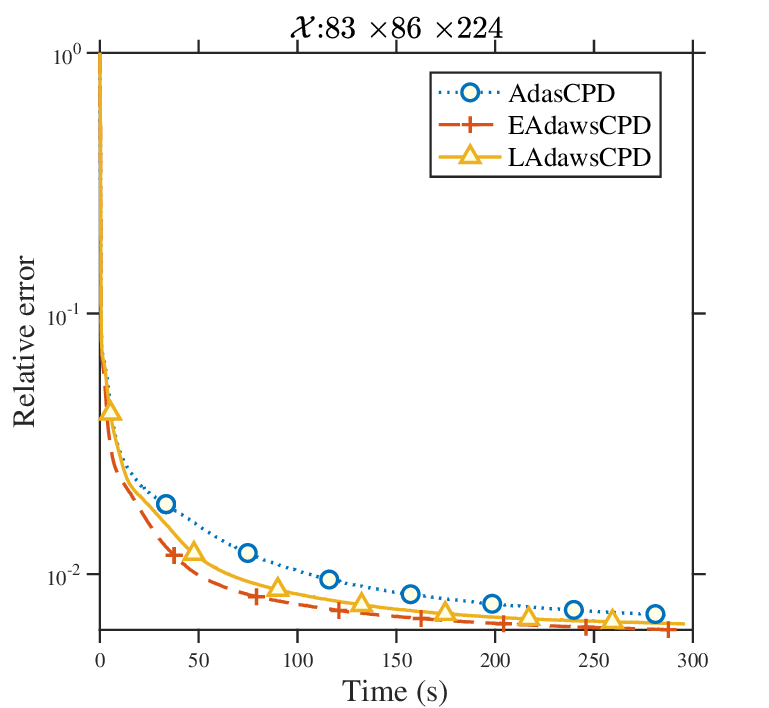}
	\caption{Number of iterations v.s. Relative errors and Time v.s. Relative errors output by the algorithms with $|\mc{F}_n|=20$ and $R=10$ for 
	SalinasA..}
	\label{fig:real}
\end{figure}

\Cref{tab:real} shows the relative errors and running time of algorithms for these  
tensors under different target ranks. The numerical results are returned after 20000 iterations (30000 iterations for Pavia Uni.) with the standard Gaussian matrices being the initial factor matrices. We also plot the results of SalinasA. from \Cref{tab:real} in \Cref{fig:real}.
It is seen that our algorithms still outperform \texttt{AdasCPD}. However, in contrast to the results for synthetic data, the differences between them are not very remarkable. 
The main reason may be that these real data are very 
even. 
To illustrate, we plot the leverage scores and coherence of the coefficient matrices $\mat{Z}^{(n)}$ of CP-ALS subproblems in \Cref{fig:reason_lev}. For comparison, we also plot the corresponding results for \textbf{Data I} mentioned above. 
Examining the vertical coordinates of \Cref{fig:reason_lev-SalinasA,fig:reason_lev-IndianP,fig:reason_lev-PaviaU,fig:reason_lev-Kolda} attentively, we can observe that the leverage scores for the three real datasets oscillate in a narrower range compared with \textbf{Data I}. Additionally, \Cref{fig:reason_lev-bar} shows that the coherence of these real data are indeed not very high. 
In addition, 
we also plot the squared Euclidean norms of the rows of the coefficient matrices and their maximum values in \Cref{fig:reason_euc}\footnote{In \Cref{fig:reason_euc-bar}, 
we take the logarithm of the vertical coordinate to make the figure much clearer and easier to compare. 
}. The findings are similar. 

\begin{figure}[htbp] 
	\centering 
	\subfloat[SalinasA.]{\includegraphics[scale=0.22]{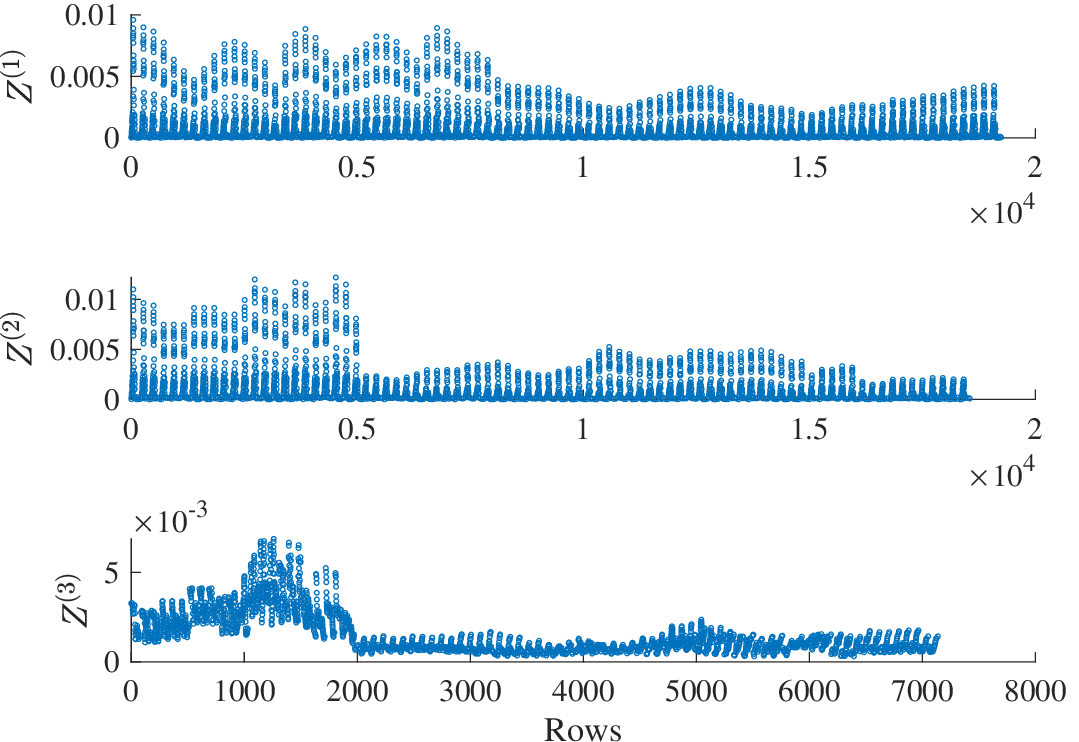} \label{fig:reason_lev-SalinasA}} 
	\subfloat[Indian Pines]{\includegraphics[scale=0.22]{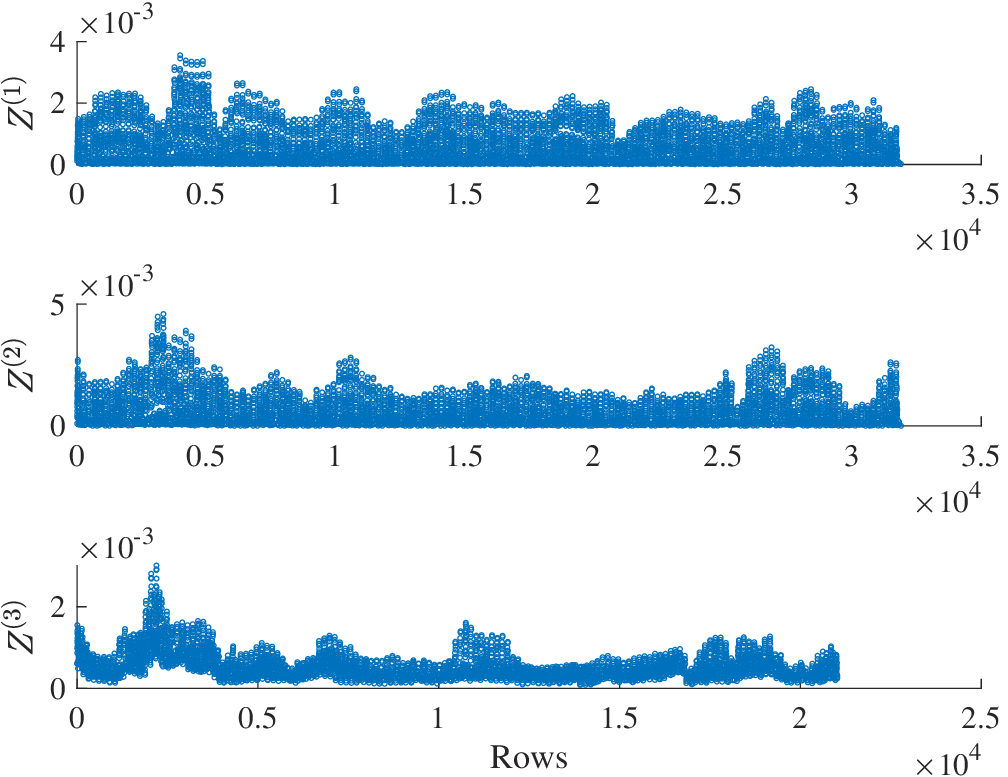} \label{fig:reason_lev-IndianP}} 
	\subfloat[Pivia Uni.]{\includegraphics[scale=0.22]{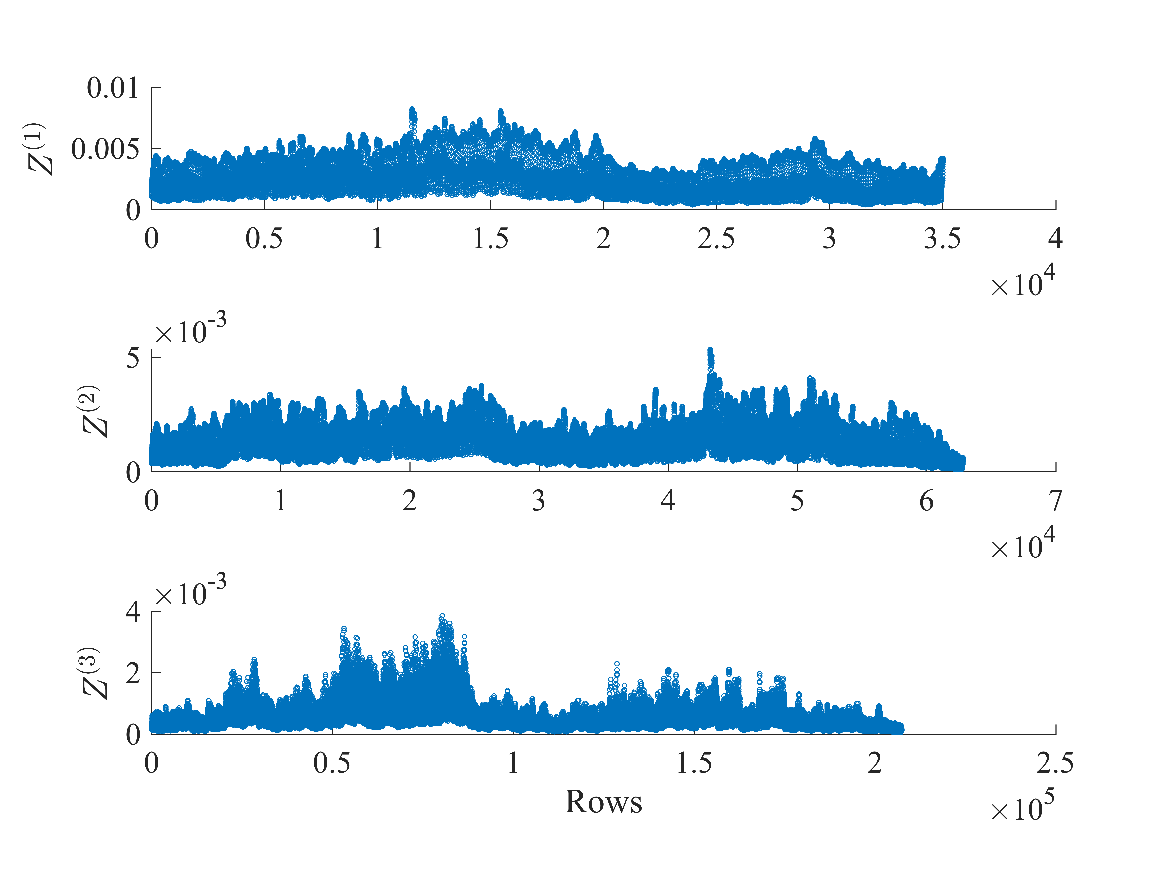}\label{fig:reason_lev-PaviaU}} 
	\quad 
	\subfloat[Data I]{\includegraphics[scale=0.3]{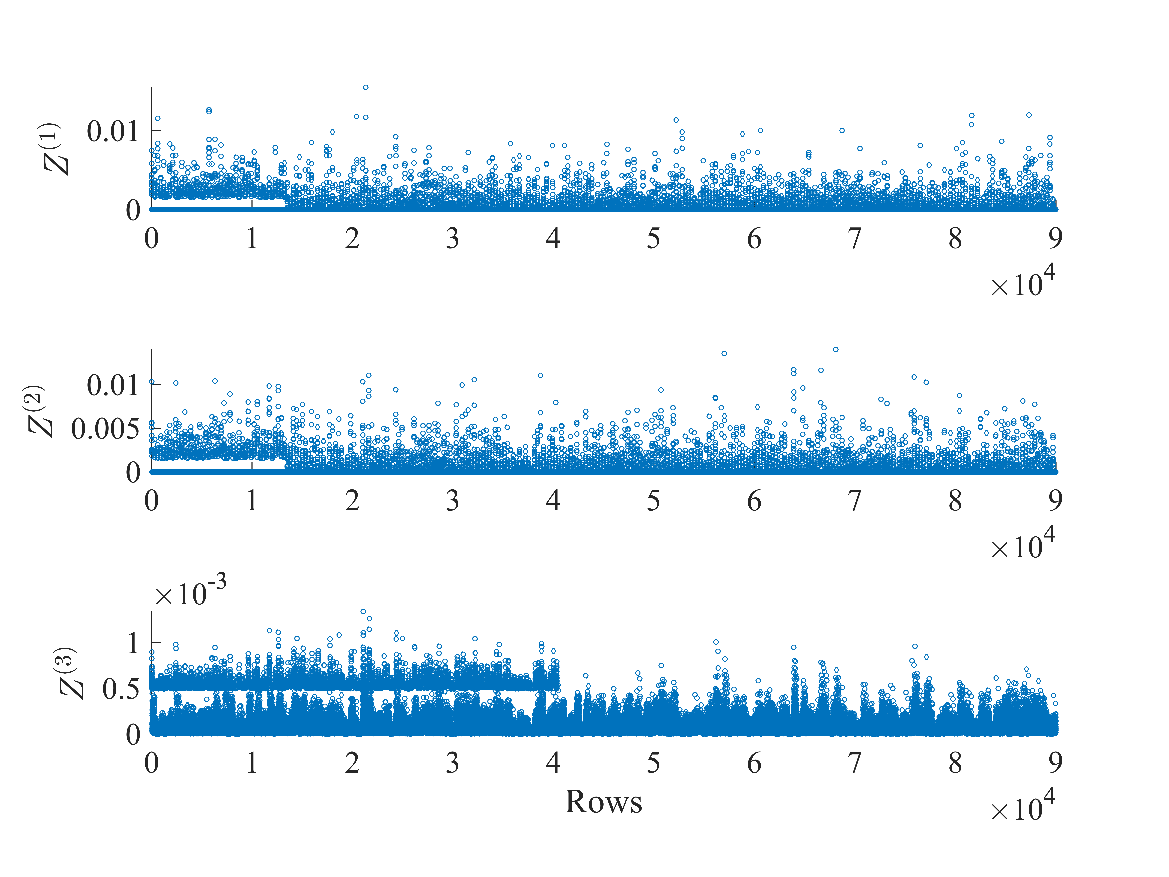}\label{fig:reason_lev-Kolda}} 
	\subfloat[Bar chart]{\includegraphics[scale=0.3]{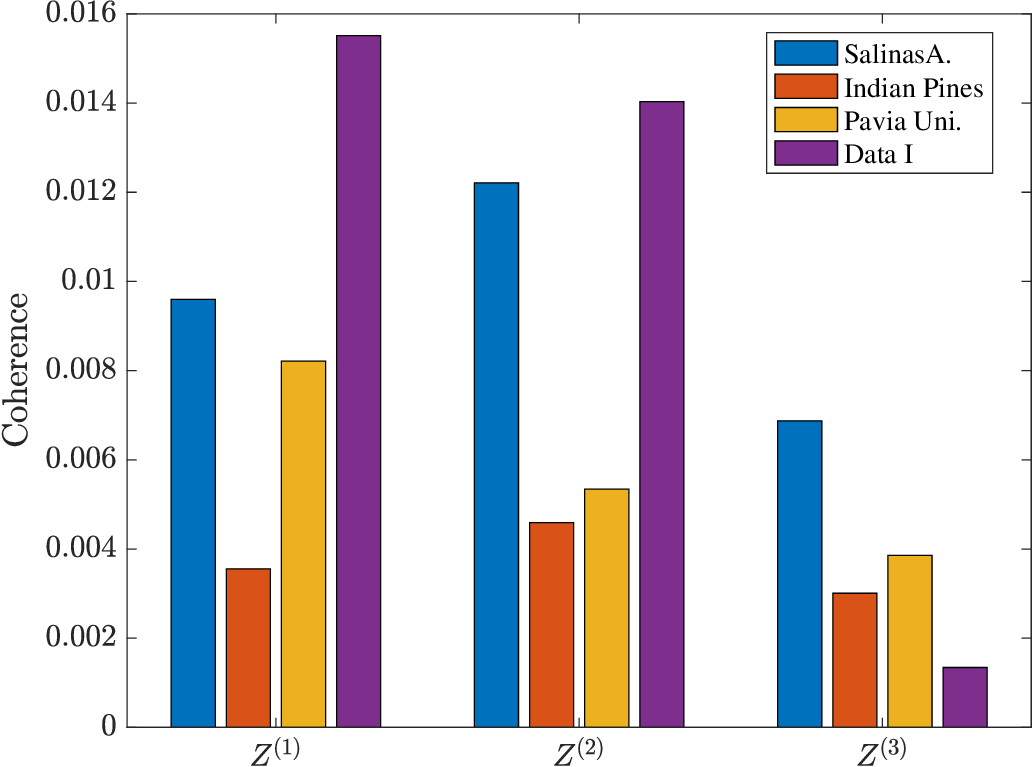}\label{fig:reason_lev-bar}} 
	\caption{(a)-(d) Leverage scores for 
	four different tensors. (e) Coherence for 
	four different tensors.}
	\label{fig:reason_lev}
\end{figure} 

\begin{figure}[htbp] 
	\centering 
	\subfloat[SalinasA.]{\includegraphics[scale=0.22]{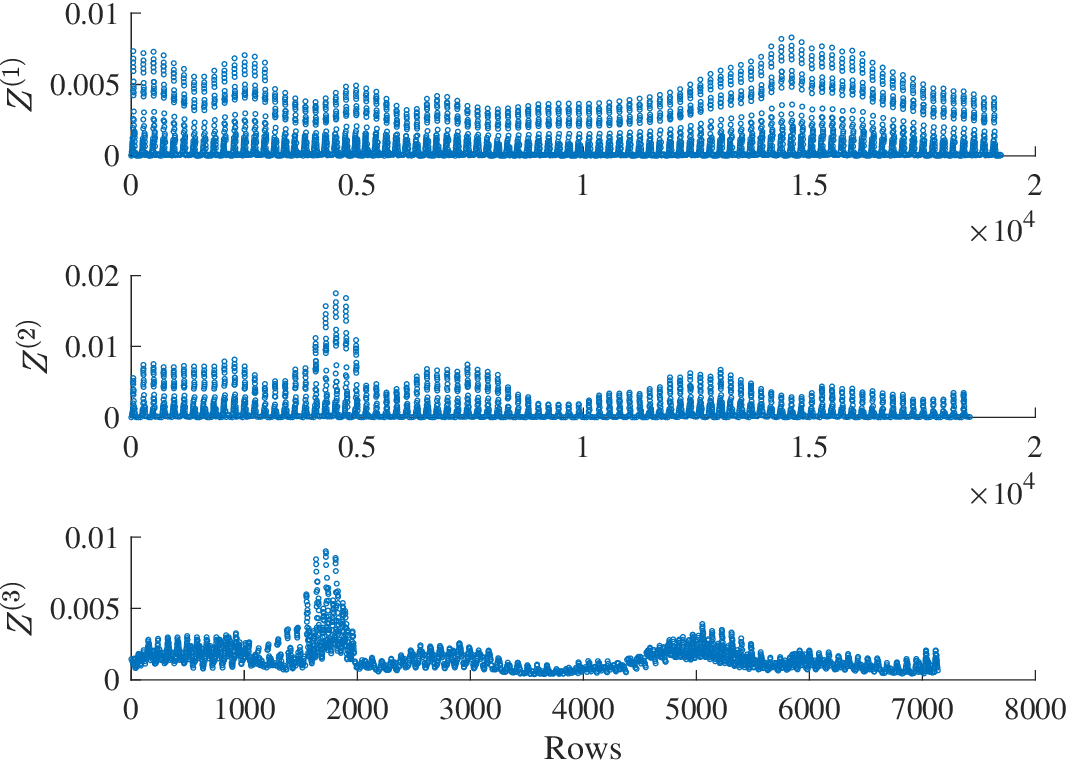}} 
	\subfloat[Indian Pines]{\includegraphics[scale=0.22]{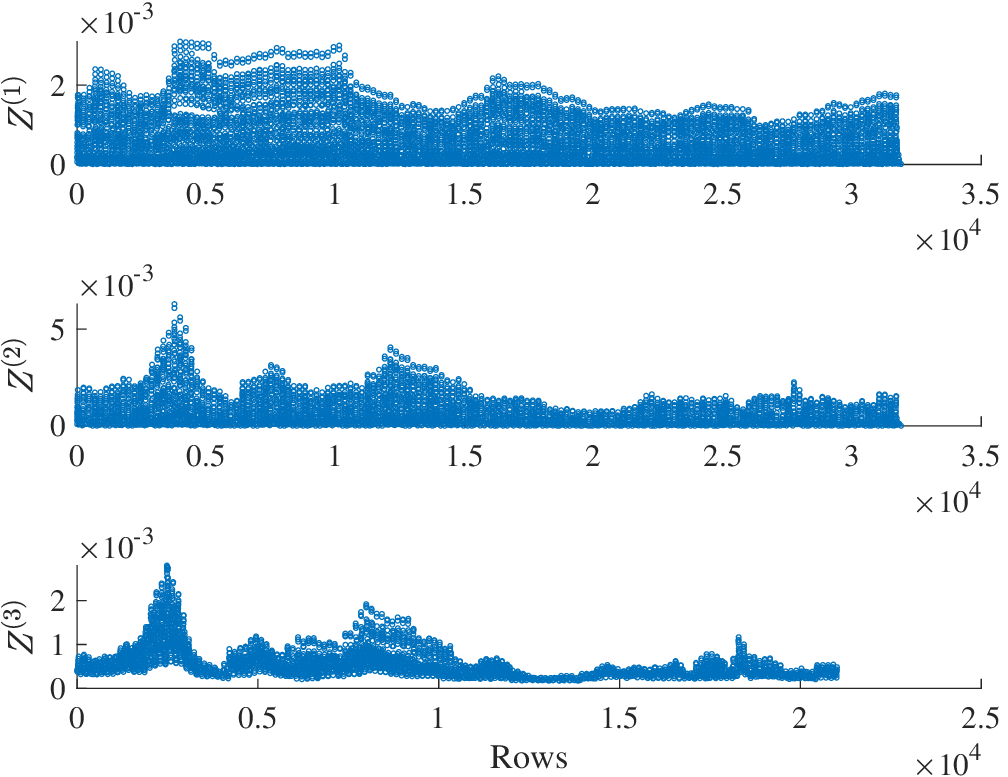}} 
	\subfloat[Pivia Uni.]{\includegraphics[scale=0.22]{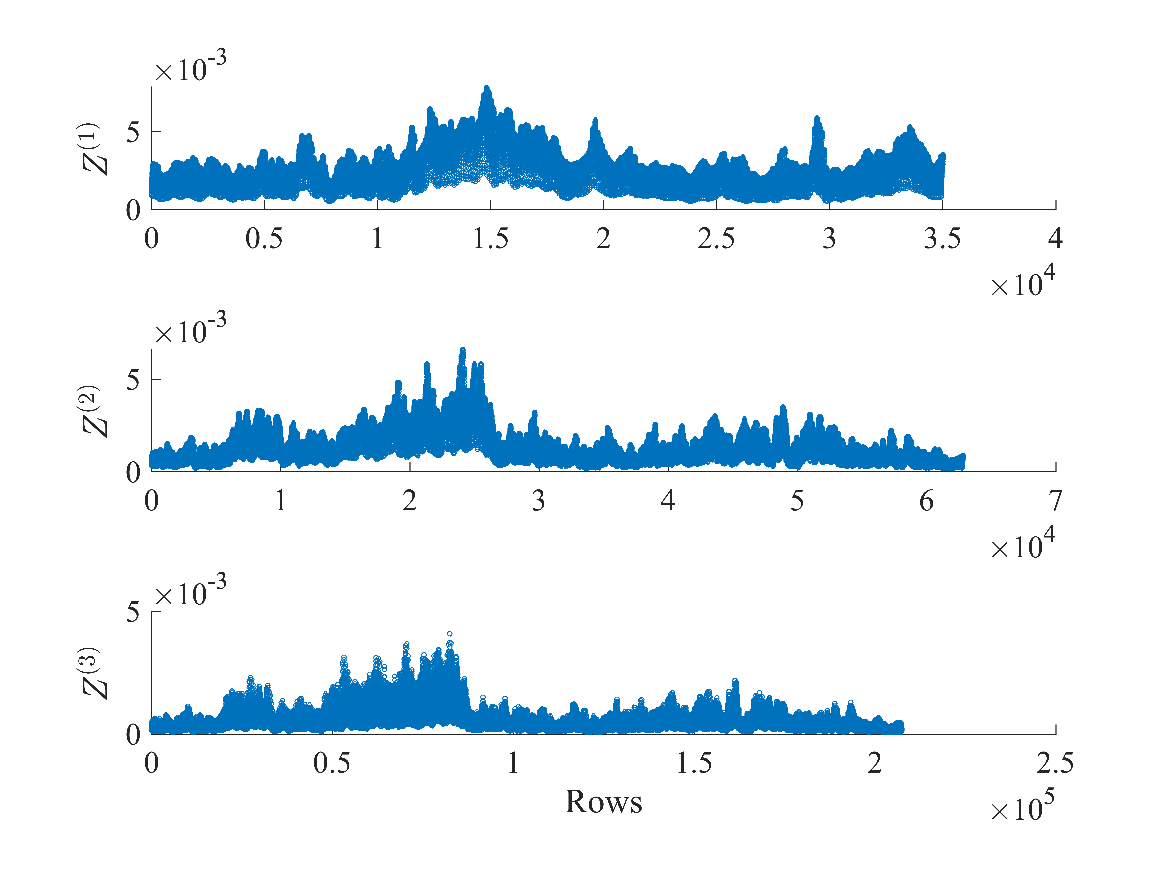}} 
	\quad 
	\subfloat[Data I]{\includegraphics[scale=0.3]{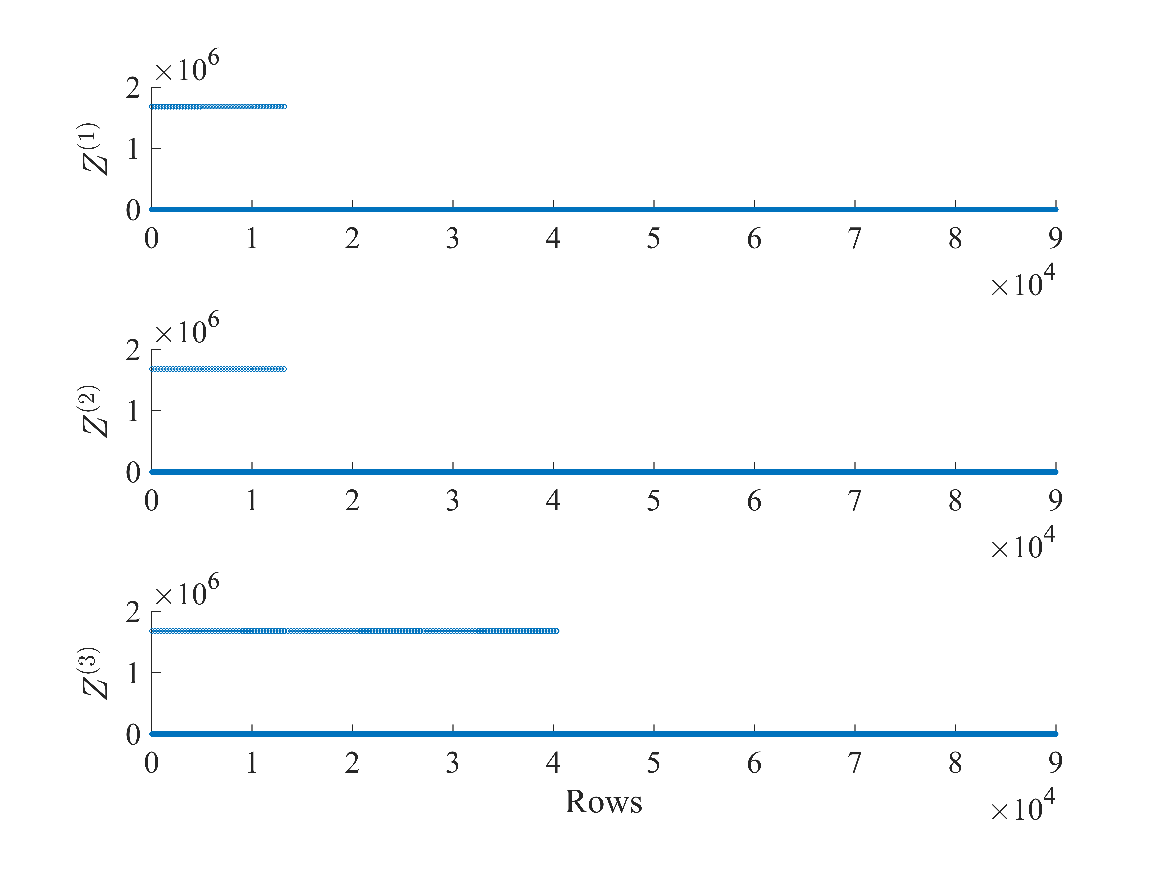}} 
	\subfloat[Bar chart]{\includegraphics[scale=0.3]{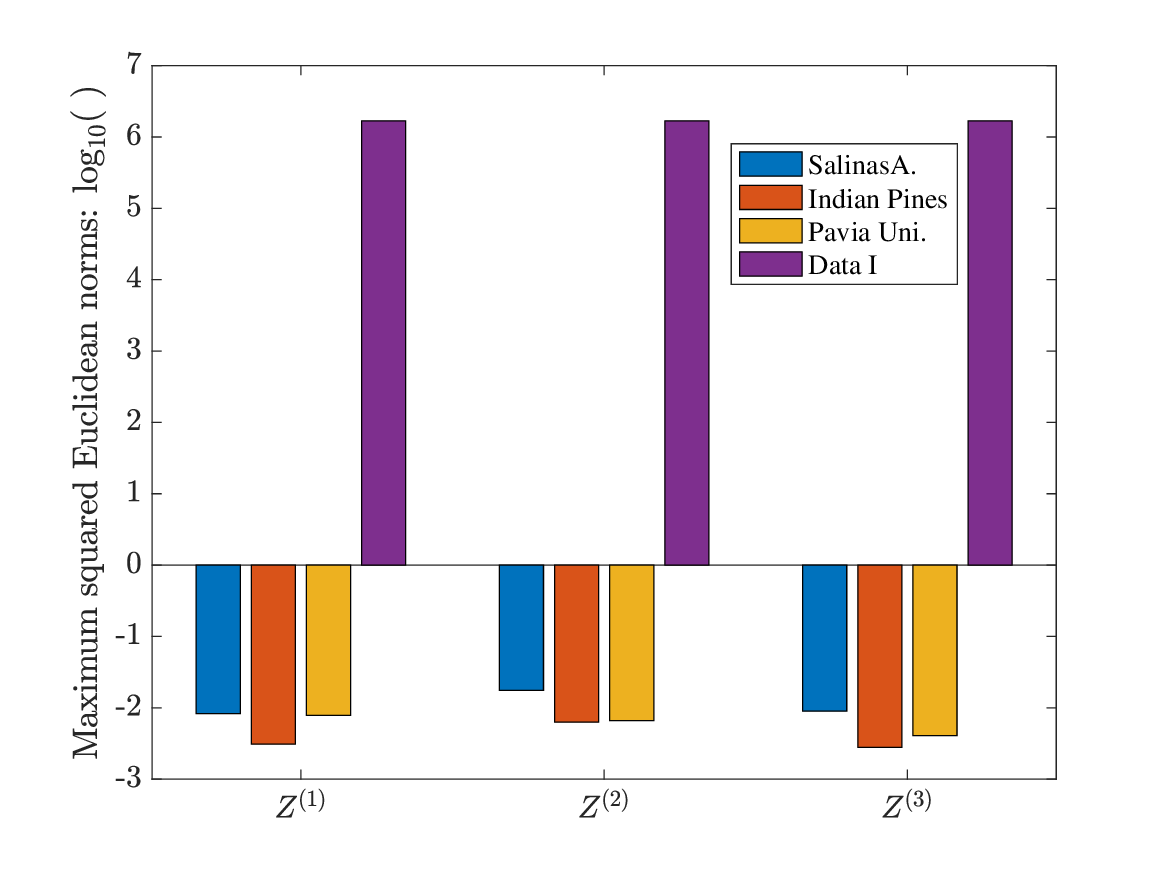} \label{fig:reason_euc-bar} } 
	\caption{(a)-(d) Squared Euclidean norms for 
	four different tensors. (e) Maximum squared Euclidean norms for 
	four different tensors.}
	\label{fig:reason_euc}
\end{figure}

\section{Concluding Remarks}
\label{sec:Conclusion}
In this paper, based on two different sampling strategies, we propose a block-randomized gradient descent method with importance sampling for CP decomposition, and provide its detailed theoretical analysis. 
Numerical experiments show that our method always outperforms the existing one, \texttt{AdasCPD}. As done in \cite{wang2021momentum}, it is interesting to consider the momentum version of our method. Also, it is interesting to introduce the greedy sampling strategies \cite{bai2018GreedyRandomized,zhang2022GreedyMotzkin} and the scaling technique \cite{tong2021AcceleratingIllconditioned} into our method. We leave them for future research.


\subsection*{Funding}
This work was supported by the National Natural Science Foundation of China (No. 11671060) and the Natural Science Foundation Project of CQ CSTC (No. cstc2019jcyj-msxmX0267).

\subsection*{Data Availability}
The data that support the findings of this study are available from the corresponding author upon reasonable request.

\subsection*{Competing Interests}
The authors declare that they have no conflict of interest.

\bibliography{sn-bibliography-yzq}

\end{sloppypar}
\end{document}